\titleformat{\subsection}{\it}{\thesubsection.\enspace}{1pt}{}
\newtheorem{theo}{Theorem}[section]
\newtheorem{lemm}[theo]{Lemma}
\numberwithin{equation}{section}
\begin{document}
\title{Global weak solutions for a generalized Camassa-Holm equation
\hspace{-4mm}
}

\author{Xi $\mbox{Tu}^1$\footnote{E-mail: tuxi@mail2.sysu.edu.cn} \quad and\quad
 Zhaoyang $\mbox{Yin}^{1,2}$\footnote{E-mail: mcsyzy@mail.sysu.edu.cn}\\
 $^1\mbox{Department}$ of Mathematics,
Sun Yat-sen University,\\ Guangzhou, 510275, China\\
$^2\mbox{Faculty}$ of Information Technology,\\ Macau University of Science and Technology, Macau, China}

\date{}
\maketitle
\hrule

\begin{abstract}
In this paper we mainly investigate the Cauchy problem of a generalized Camassa-Holm equation. First by the relationship between the Degasperis-Procesi equation and the generalized Camassa-Holm equation, we obtain two global existence results and two blow-up results. Then, we prove the existence and uniqueness of global weak solutions under some certain sign condition.\\

\vspace*{5pt}
\noindent {\it 2000 Mathematics Subject Classification}: 35Q53, 35A01, 35B44, 35B65.

\vspace*{5pt}
\noindent{\it Keywords}: A generalized Camassa-Holm equation; Global existence; Blow up; Global weak solution.
\end{abstract}

\vspace*{10pt}

\tableofcontents

%
%
%


\section{Introduction}
In this paper we consider the Cauchy problem for the following generalized Camassa-Holm equation,
\begin{align}\label{E1}
\left\{
\begin{array}{ll}
u_t-u_{txx}=\partial_x(2+\partial_x)[(2-\partial_x)u]^2,~~~~  t>0,\\[1ex]
u(0,x)=u_{0}(x).
\end{array}
\right.
\end{align}
Note that $G(x)=\frac{1}{2}e^{-|x|}$ and $G(x)\star f =(1-\partial _x^2)^{-1}f$ for all $f \in L^2(\mathbb{R})$ and $G \star m=u$. Then we can rewrite (\ref{E1}) as follows:
\begin{align}\label{E2}
\left\{
\begin{array}{ll}
u_t-4uu_x=-u^2_{x}+G\ast[\partial_x(2u_x^2+6 u^{2})+u^2_{x}], ~~ t>0,\\[1ex]
u(0,x)=u_{0}(x).
\end{array}
\right.
\end{align}
The equation (\ref{E1}) was proposed recently by Novikov in \cite{n1}. He showed that the equation (1.1) is integrable by using as definition of integrability the existence of an infinite hierarchy of quasi-local higher symmetries \cite{n1} and it belongs to the following class \cite{n1}:
\begin{align}\label{E02}
(1-\partial^2_x)u_t=F(u,u_x,u_{xx},u_{xxx}),
\end{align}
which has attracted much interest, particularly in the possible integrable members of (\ref{E02}).

The most celebrated integrable member of (\ref{E02}) is the well-known Camassa-Holm (CH) equation \cite{Camassa}:
\begin{align}
(1-\partial^2_x)u_t=3uu_x-2u_{x}u_{xx}-uu_{xxx}.
\end{align}
The CH equation can be regarded as a shallow water wave equation \cite{Camassa, Constantin.Lannes}.  It is completely integrable. That means that the system can be transformed into a linear flow at constant speed in suitable action-angle variables (in the sense of infinite-dimensional Hamiltonian systems), for a large class of initial data \cite{Camassa,Constantin-P,Constantin.mckean}.
 It also has a bi-Hamiltonian structure \cite{Constantin-E,Fokas}, and admits exact peaked solitons of the form $ce^{-|x-ct|}$ with $c>0$, which are orbitally stable \cite{Constantin.Strauss}. It is worth mentioning that the peaked solitons present the characteristic for the traveling water waves of greatest height and largest amplitude and arise as solutions to the free-boundary problem for incompressible Euler equations over a flat bed, cf. \cite{Camassa.Hyman,Constantin2,Constantin.Escher4,Constantin.Escher5,Toland}.

The local well-posedness for the Cauchy problem of the CH equation in Sobolev spaces and Besov spaces was discussed in \cite{Constantin.Escher,Constantin.Escher2,d1,d2,Guillermo}. It was shown that there exist global strong solutions to the CH equation \cite{Constantin,Constantin.Escher,Constantin.Escher2} and finite time blow-up strong solutions to the CH equation \cite{Constantin,Constantin.Escher,Constantin.Escher2,Constantin.Escher3}. The existence and uniqueness of global weak solutions to the CH equation were proved in \cite{Constantin.Molinet, Xin.Z.P}. The global conservative and dissipative solutions of CH equation were investigated in \cite{Bressan.Constantin,Bressan.Constantin2}.

The second celebrated integrable member of (\ref{E02}) is the famous Degasperis-Procesi (DP) equation \cite{D-P}:
\begin{align}
(1-\partial^2_x)u_t=4uu_x-3u_{x}u_{xx}-uu_{xxx}.
\end{align}
The DP
equation can be regarded as a model for nonlinear shallow water
dynamics and its asymptotic accuracy is the same as for the
CH shallow water equation \cite{D-G-H}. The DP equation is integrable and has a bi-Hamiltonian structure \cite{D-H-H}. An inverse scattering approach for the
DP equation was presented in \cite{Constantin.lvanov.lenells,Lu-S}. Its
traveling wave solutions was investigated in \cite{Le,V-P}. \par
The
local well-posedness of the Cauchy problem of the DP equation in Sobolev spaces and Besov spaces was established in
\cite{G-L,H-H,y1}. Similar to the CH equation, the
DP equation has also global strong solutions
\cite{L-Y1,y2,y4} and finite time blow-up solutions
\cite{E-L-Y1, E-L-Y,L-Y1,L-Y2,y1,y2,y3,y4}. It also has global weak
solutions \cite{C-K,E-L-Y1,y3,y4}.
\par
Although the DP equation is similar to the
CH equation in several aspects, these two equations are
truly different. One of the novel features of the DP
different from the CH equation is that it has not only
peakon solutions \cite{D-H-H} and periodic peakon solutions
\cite{y3}, but
also shock peakons \cite{Lu} and the periodic shock waves \cite{E-L-Y}.

The third celebrated integrable member of (\ref{E02}) is the known Novikov equation \cite{n1}:
\begin{align}
(1-\partial^2_x)u_t=3uu_{x}u_{xx}+u^2u_{xxx}-4u^2u_x.
\end{align}
The most difference between the Novikov equation and the CH and DP equations is that the former one has cubic nonlinearity and the latter ones have quadratic nonlinearity.

It was showed that the Novikov equation is integrable, possesses a bi-Hamiltonian structure, and admits exact peakon solutions $u(t,x)=\pm\sqrt{c}e^{|x-ct|}$ with $c>0$ \cite{Hone}.\\
$~~~~~~$ The local well-posedness for the Novikov equation in Sobolev spaces and Besov spaces was studied in \cite{Wu.Yin2,Wu.Yin3,Wei.Yan,Wei.Yan2}. The global existence of strong solutions under some sign conditions were established in \cite{Wu.Yin2} and the blow-up phenomena of the strong solutions were shown in \cite{Wei.Yan2}. The global weak solutions for the Novikov equation were studied in \cite{Laishaoyong,Wu.Yin}.

Recently, the Cauchy problem of (\ref{E1}) in the Besov spaces $B^{s}_{p,r},~s>max\{2+\frac{1}{p},~\frac{5}{2}\},~1\leq p,q \leq \infty$ and the critical Besov space $B^{\frac{5}{2}}_{2,1}$ has been studied in \cite{Tu-Yin1,Tu-Yin2}.
To our best knowledge, the global solution of (\ref{E1}) has not been studied yet. In this paper we first investigate the global solution of (\ref{E2}) with initial data in the Sobolev space $H^{s},~s>\frac{5}{2}$ and two new blow-up results. The main idea is based on the global solution of DP equation with initial data in the Sobolev space
$H^{s},~s>\frac{3}{2}$. Then, we prove the existence and uniqueness of global weak solution by using the method of approximation of smooth solutions and a regularization technique.

The paper is organized as follows. In Section 2 we introduce some preliminaries which will be used in the sequel. In Section 3 we show global existences and blow-up phenomena. Section 4 is devoted to the study of global weak solutions of (\ref{E1}).
\vspace*{2em}
\section{Preliminaries}

\begin{theo}\label{Thm1}\cite{Tu-Yin1}
Let $1\leq p,~r\leq \infty,~s>\max\{2+\frac{1}{p},\frac{5}{2}\},$ and  $~u_0\in B^s_{p,r}.$ Then there exists some $T>0$, such that (\ref{E2}) has a unique solution $u$ in
$ E^s_{p,r}(T)$, where
\begin{align}
   E^s_{p,r}(T)\triangleq
\left\{
\begin{array}{ll}
C([0,T);B^s_{p,r})\cap C^1([0,T);B^{s-1}_{p,r}),~~~~if~r<\infty, \\[1ex]
C_w([0,T);B^s_{p,\infty})\cap C^{0,1}([0,T);B^{s-1}_{p,\infty}),~~~~if~r=\infty.\\[1ex]
\end{array}
\right.
\end{align}
 Moveover the solution depends continuously on the initial data $u_{0}.$
\end{theo}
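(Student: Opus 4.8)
The plan is to reduce (\ref{E1}) to the Degasperis--Procesi (DP) equation by a linear change of unknown and then to transport the known DP local theory back to $u$. Setting $v:=(2-\partial_x)u=2u-u_x$ and applying the constant-coefficient operator $(2-\partial_x)$ to (\ref{E1}) — using that it commutes with $\partial_x$ and with $(1-\partial_x^2)$, together with the factorisation $(2-\partial_x)(2+\partial_x)=4-\partial_x^2$ — one obtains
\[
(1-\partial_x^2)v_t=\partial_x(4-\partial_x^2)v^2,
\]
which after the rescaling $V:=-2v=2u_x-4u$ is exactly the DP equation $V_t+VV_x=-\tfrac32\partial_x G\ast V^2$. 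The computation is reversible: applying $(2-\partial_x)^{-1}$ to the $v$-equation and using $(2-\partial_x)^{-1}\partial_x(2-\partial_x)=\partial_x$ returns (\ref{E1}) for $u=(2-\partial_x)^{-1}v$. Since the symbol $2-i\xi$ is bounded below by $2$ and grows like $|\xi|$, the operator $(2-\partial_x)$ is a Fourier multiplier of order $1$ and an isomorphism $B^s_{p,r}\to B^{s-1}_{p,r}$ whose inverse has order $-1$; thus $u\leftrightarrow v$ is a $t$-independent bijection between $B^s_{p,r}$ and $B^{s-1}_{p,r}$.

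Concretely, I would fix $u_0\in B^s_{p,r}$, set $v_0=(2-\partial_x)u_0\in B^{s-1}_{p,r}$, and solve DP with datum $v_0$. The hypothesis $s>\max\{2+\tfrac1p,\tfrac52\}$ is precisely $s-1>\max\{1+\tfrac1p,\tfrac32\}$, so the known local well-posedness of DP in Besov spaces (\cite{G-L,H-H,y1}) yields a unique $v$ at regularity $s-1$, in $C([0,T);B^{s-1}_{p,r})\cap C^1([0,T);B^{s-2}_{p,r})$ for $r<\infty$ (respectively the weakly continuous class for $r=\infty$), depending continuously on $v_0$. Defining $u:=(2-\partial_x)^{-1}v$ and recalling that $(2-\partial_x)^{-1}$ is a bounded isomorphism $B^{\sigma-1}_{p,r}\to B^\sigma_{p,r}$ for every $\sigma$, independent of $t$, this map carries the DP solution class at level $s-1$ onto exactly $E^s_{p,r}(T)$ and transfers strong and weak time-continuity, the $C^1$-in-time statement, uniqueness and continuous dependence verbatim, producing the asserted solution of (\ref{E2}).

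The genuine content is understanding why one should pass to $v$, i.e. why a direct estimate on (\ref{E2}) fails to close at the top level. Regrouping the quadratic terms of (\ref{E2}) as a transport equation gives
\[
u_t-(4u-u_x)u_x=G\ast[\partial_x(2u_x^2+6u^2)+u_x^2],
\]
whose right-hand side is harmless in $B^s_{p,r}$, because $G\ast=(1-\partial_x^2)^{-1}$ gains two derivatives and so compensates both the $\partial_x$ and the regularity of the quadratic terms. The difficulty is that the $-u_x^2$ term forces the transport velocity to be $4u-u_x$ rather than $4u$: its derivative involves $u_{xx}\in B^{s-2}_{p,r}$, so the commutator in a Littlewood--Paley energy estimate at level $B^s$ would demand control of $\|u\|_{B^{s+1}_{p,r}}$. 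Equivalently, with $\Lambda=(1-\partial_x^2)^{1/2}$, a direct $H^s$ estimate meets $(\Lambda^s u,\Lambda^s u_x^2)_{L^2}$, whose natural bound $\|u_x\|_{L^\infty}\|u\|_{H^{s+1}}$ loses one derivative. Passing to $v$ repairs exactly this: in the DP equation the transport velocity is proportional to $v$ itself, of the same regularity as $v$, so its derivative sits one level below and the commutator closes. This also explains the one-derivative shift relative to the DP threshold, and the appearance of $s>2+\tfrac1p$, which is what makes $u_{xx}$ (equivalently $v_x$) bounded.

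Two endpoint issues are inherited from the DP theory and need only be quoted: for $r=\infty$ one has merely weak continuity in time, matching the definition of $E^s_{p,r}(T)$, while the strong continuity $u\in C([0,T);B^s_{p,r})$ for $r<\infty$ is the usual Bona--Smith-type refinement, both transferring directly from $v$. Should a self-contained argument be preferred, I would run the standard scheme of mollified/linearised iterates, uniform bounds and contraction in a lower norm, and a limiting argument for top-level continuity — but applied to the $v$-equation, since, as just explained, that is the formulation in which the estimates actually close.
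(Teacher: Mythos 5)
Your proposal is sound, but it is not how this statement is handled here: the paper does not prove Theorem \ref{Thm1} at all --- it is quoted from \cite{Tu-Yin1}, where local well-posedness is obtained by working directly on (\ref{E2}) with Littlewood--Paley decomposition and transport-equation estimates (linearized approximate problems, uniform bounds, convergence in a weaker norm, and a Bona--Smith-type argument for continuity in time). Your route --- conjugating by the isomorphism $(2-\partial_x)\colon B^{\sigma}_{p,r}\to B^{\sigma-1}_{p,r}$ so that $V=2u_x-4u$ solves the standard DP equation, then quoting the DP theory at regularity $s-1>\max\{1+\frac{1}{p},\frac{3}{2}\}$ --- is a genuinely different and shorter argument, and it is in fact exactly the transformation this paper itself introduces at the start of Section 3 (there written as $\widetilde u=2(2-\partial_x)u$, i.e.\ $-V$) to import the global existence and blow-up results of \cite{liu-yin1}; your derivation of the $V$-equation, the threshold arithmetic, and your diagnosis of why a naive top-order estimate on (\ref{E2}) loses a derivative (the transport velocity $4u-u_x$ contains $u_x$) are all correct. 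What your approach buys is brevity and a structural explanation of the one-derivative shift in the threshold; what the direct approach of \cite{Tu-Yin1} buys is self-containedness, i.e.\ independence from the precise form of the quoted DP theorem. On that last point, be careful with citations: the full package you need (existence, uniqueness, continuous dependence, and the $C_w$/$C^{0,1}$ statement when $r=\infty$) in the class $E^{s-1}_{p,r}(T)$ is supplied only by \cite{G-L} among your references --- \cite{H-H} treats the Novikov equation and \cite{y1} only $H^s$, $s>\frac{3}{2}$. Finally, two steps where your ``transfers verbatim'' does real work and should be spelled out: for uniqueness you need the forward direction of the equivalence (every solution of (\ref{E2}) in $E^s_{p,r}(T)$ is carried by $-2(2-\partial_x)$ into a DP solution in $E^{s-1}_{p,r}(T)$), and for $r=\infty$ you need that the multipliers $(2-\partial_x)^{\pm1}$ preserve weak continuity in time; both are true at these regularities, but neither is automatic from norm-boundedness alone.
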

From \cite{Tu-Yin1}, for every $t>0$, we have that
\\(1) $u(t,\cdot)-u_{xx}(t,\cdot)\leq0$ and $|u_{x}(t,\cdot)|\leq u(t,\cdot)$ on $\mathbb{R}$.
\\(2) $\|u(t,\cdot)\|_{L^1(\mathbb{R})}=\|m(t,\cdot)\|_{L^{1}(\mathbb{R})}=\|m_{0}\|_{L^1(\mathbb{R})}$.
\\(3) $\|u_{x}(t,\cdot)\|_{L^{\infty}(\mathbb{R})}\leq \|m_{0}\|_{L^1(\mathbb{R})}$ and
$\|u(t,\cdot)\|_{H^1(\mathbb{R})}=\|u_{0}\|_{H^1(\mathbb{R})}$.

\begin{lemm} \label{lem1}\cite{malek1}
Let $T >0$. If
$$f,g \in L^2((0, T); H^1(\mathbb{R}))~~~\text{ and}~~~
\frac{df}{dt},~\frac{dg}{dt}
\in L^2((0, T);H^1(\mathbb{R})),$$
then $f, g$ are a.e. equal to a function continuous from [0,T] into $L^2(\mathbb{R})$ and
$$<f (t); g(t)> - <f(s); g(s)>=
\int^{t}_{s}<\frac{df(\tau)}{d\tau},g(\tau)>d\tau+
\int^{t}_{s}<\frac{dg(\tau)}{d\tau},f(\tau)>d\tau
$$
for all $s,t\in[0, T]$.
\end{lemm}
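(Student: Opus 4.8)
The plan is to obtain both conclusions from the elementary product rule after regularizing in the time variable, exploiting that the (strong) hypotheses place $f$ and $g$ in $H^1((0,T);L^2(\mathbb{R}))$.

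First I would settle the continuity claim. Since $H^1(\mathbb{R})\hookrightarrow L^2(\mathbb{R})$, the hypotheses give $f,\,df/dt\in L^2((0,T);L^2(\mathbb{R}))$, that is $f\in H^1((0,T);L^2(\mathbb{R}))$, and likewise for $g$. A Hilbert-space valued function in $H^1((0,T);L^2)$ is absolutely continuous in time: it coincides a.e. with the continuous representative $t\mapsto f(0)+\int_0^t (df/d\tau)\,d\tau\in C([0,T];L^2(\mathbb{R}))$. I would work with these representatives throughout; in particular $t\mapsto\langle f(t),g(t)\rangle$ is then a genuine continuous function on $[0,T]$.

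Next I would prove the identity. Fix a standard mollifier $(\rho_\varepsilon)$ in the time variable and set $f_\varepsilon=\rho_\varepsilon * f$, $g_\varepsilon=\rho_\varepsilon * g$ (after extending $f,g$ to a neighbourhood of $[0,T]$), so that $f_\varepsilon,g_\varepsilon$ are smooth in $t$ with values in $H^1(\mathbb{R})$. For such functions the product rule holds classically, and integrating from $s$ to $t$ gives
$$\langle f_\varepsilon(t),g_\varepsilon(t)\rangle-\langle f_\varepsilon(s),g_\varepsilon(s)\rangle=\int_s^t\Big\langle\frac{df_\varepsilon}{d\tau},g_\varepsilon\Big\rangle d\tau+\int_s^t\Big\langle\frac{dg_\varepsilon}{d\tau},f_\varepsilon\Big\rangle d\tau.$$
Because $df_\varepsilon/d\tau=\rho_\varepsilon * (df/d\tau)$, the standard convergence of mollifications yields $f_\varepsilon\to f$, $g_\varepsilon\to g$, $df_\varepsilon/d\tau\to df/d\tau$ and $dg_\varepsilon/d\tau\to dg/d\tau$, all strongly in $L^2((0,T);L^2(\mathbb{R}))$. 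Letting $\varepsilon\to0$, the boundary terms converge at each fixed $s,t$ by the pointwise-in-time $L^2$ convergence of the continuous representatives, while the time-integrals converge because each integrand pairs two sequences converging strongly in $L^2((0,T);L^2)$. This produces the asserted formula.

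The point that needs the most care — and where I expect the real work to lie — is the passage to the limit in the bilinear terms $\int_s^t\langle df_\varepsilon/d\tau,g_\varepsilon\rangle\,d\tau$: weak convergence of the factors would not suffice for a product, so it is essential that mollification delivers \emph{strong} $L^2((0,T);L^2)$ convergence of both $df_\varepsilon/d\tau$ and $g_\varepsilon$ (and symmetrically in the other term). A secondary technicality is the behaviour at the endpoints $0$ and $T$, which I would handle either by a reflection/extension of $f,g$ before mollifying, or, more cleanly, by first establishing the identity for a.e. $s,t\in(0,T)$ and then extending it to all of $[0,T]$ using the continuity proved in the first step (both sides being continuous in $s$ and $t$).
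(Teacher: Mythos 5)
The paper itself contains no proof of this lemma: it is quoted directly from the monograph \cite{malek1} and used as a black box in Section 4, so there is nothing internal to compare your argument against. Judged on its own merits, your proof is correct, and it is essentially the standard textbook argument for this kind of statement. The key structural observation you make is the right one: because the hypotheses place $df/dt$ and $dg/dt$ in $L^2((0,T);H^1(\mathbb{R}))\hookrightarrow L^2((0,T);L^2(\mathbb{R}))$ (rather than merely in a dual space such as $L^2((0,T);H^{-1})$, as in the harder Lions--Magenes version), everything reduces to properties of $H^1((0,T);L^2(\mathbb{R}))$ functions, and no Gelfand-triple duality subtleties arise. Your three steps are all sound: (i) the absolutely continuous representative gives continuity into $L^2$ — one small wording fix is that you should write $f(t)=c+\int_0^t f'(\tau)\,d\tau$ for \emph{some} $c\in L^2(\mathbb{R})$, since ``$f(0)$'' has no meaning before the representative is chosen; (ii) the strong--strong convergence $\|f_\varepsilon'-f'\|_{L^2(L^2)}\to 0$, $\|g_\varepsilon-g\|_{L^2(L^2)}\to 0$ is exactly what justifies the limit in the bilinear terms, as you emphasize; (iii) the endpoint issue is genuine, and your second suggested remedy is the cleaner one: prove the identity for interior $s,t$ and extend by continuity, noting that the right-hand side is continuous in $(s,t)$ because $\tau\mapsto\langle f'(\tau),g(\tau)\rangle$ and $\tau\mapsto\langle g'(\tau),f(\tau)\rangle$ lie in $L^1(0,T)$ by Cauchy--Schwarz. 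With those small polish items, the argument is complete.
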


In this paper, we will define $\{\rho_{n}\}_{n\geq1}$ by the mollifiers
$$\rho_{n}(x):=n\rho(nx)\bigg(\int_{\mathbb{R}}\rho(\xi)d\xi\bigg)^{-1},~~x\in\mathbb{R},~ n\geq1,
$$
where $\rho \in C_{c}^{\infty}$ is denoted by
\begin{align}\label{l41}
\rho(x)=\left \{\begin {array}{ll}e^{\frac{1}{x^2-1}},
\ \ \ \ \ \ \ &|x|\leq 1,\\0, \ \ \ \ \ \ & |x|\geq 1.
\end {array}\right.
\end{align}
Then, we recall several useful approximation results.

\begin{lemm}\label{lem2}\cite{Constantin.Molinet}
Let $f : \mathbb{R} \rightarrow \mathbb{R}$ be uniformly continuous and bounded. If $\mu \in \mathcal{M}(\mathbb{R})$, then
$$[\rho_{n}\ast(f\mu)-(\rho_{n}\ast(f))(\rho_{n}\ast(\mu))]\rightarrow 0 ~~\text{in}~~ L^1(\mathbb{R})~ ~\text{as}~ ~n \rightarrow \infty.$$
\end{lemm}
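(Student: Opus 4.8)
The plan is to exploit the compact support of the mollifiers together with the uniform continuity of $f$, reducing the problem to controlling the oscillation of $f$ on the small scale $1/n$. First I would write out both convolutions explicitly. Setting $g_n := \rho_n\ast f$ and recalling that $\int_{\mathbb{R}}\rho_n = 1$ with $\operatorname{supp}\rho_n\subset[-1/n,1/n]$, the difference can be recast, using that $g_n(x)$ is constant in the measure variable, as
\begin{align*}
\big[\rho_n\ast(f\mu)\big](x)-(\rho_n\ast f)(x)(\rho_n\ast\mu)(x)=\int_{\mathbb{R}}\rho_n(x-y)\big(f(y)-g_n(x)\big)\,d\mu(y).
\end{align*}

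The crucial algebraic step is to absorb $g_n(x)$ back into an average: since $\int_{\mathbb{R}}\rho_n=1$, one may write $f(y)-g_n(x)=\int_{\mathbb{R}}\rho_n(x-z)\big(f(y)-f(z)\big)\,dz$, so that
\begin{align*}
\big[\rho_n\ast(f\mu)\big](x)-(\rho_n\ast f)(x)(\rho_n\ast\mu)(x)=\int_{\mathbb{R}}\int_{\mathbb{R}}\rho_n(x-y)\rho_n(x-z)\big(f(y)-f(z)\big)\,dz\,d\mu(y).
\end{align*}
This is the reformulation that makes everything work: the difference is now a weighted average of the \emph{increments} $f(y)-f(z)$, and the two mollifier factors force both $y$ and $z$ to lie within $1/n$ of $x$.

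Next I would invoke uniform continuity. On the support of the integrand one has $|y-z|\le|y-x|+|x-z|\le 2/n$; hence, given $\varepsilon>0$, uniform continuity supplies $\delta>0$ with $|f(y)-f(z)|<\varepsilon$ whenever $|y-z|<\delta$, and taking $n$ large enough that $2/n<\delta$ forces $|f(y)-f(z)|<\varepsilon$ on the entire support. Passing to absolute values, writing $|\mu|$ for the total variation and using $\int_{\mathbb{R}}\rho_n(x-z)\,dz=1$, this yields the pointwise bound
\begin{align*}
\Big|\big[\rho_n\ast(f\mu)\big](x)-(\rho_n\ast f)(x)(\rho_n\ast\mu)(x)\Big|\le\varepsilon\int_{\mathbb{R}}\rho_n(x-y)\,d|\mu|(y).
\end{align*}

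Finally I would integrate in $x$ and apply Tonelli's theorem, the integrand being nonnegative, together with $\int_{\mathbb{R}}\rho_n(x-y)\,dx=1$, to obtain $\|\rho_n\ast(f\mu)-(\rho_n\ast f)(\rho_n\ast\mu)\|_{L^1(\mathbb{R})}\le\varepsilon\,|\mu|(\mathbb{R})$ for all sufficiently large $n$. Since $\varepsilon$ is arbitrary and $|\mu|(\mathbb{R})<\infty$ because $\mu$ is a finite measure, the claimed $L^1$ convergence to zero follows. I do not anticipate a genuine obstacle; the only delicate point is the rewriting $f(y)-g_n(x)=\int_{\mathbb{R}}\rho_n(x-z)\big(f(y)-f(z)\big)\,dz$, which converts the uncontrolled quantity $g_n(x)$ into a localized increment amenable to the uniform-continuity estimate. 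The boundedness of $f$ enters only implicitly, ensuring that $g_n$ is well defined and all integrals are finite.
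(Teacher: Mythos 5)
Your proof is correct and complete. The paper itself offers no proof of this lemma---it is quoted directly from \cite{Constantin.Molinet}---and your argument, namely the rewriting $f(y)-g_n(x)=\int_{\mathbb{R}}\rho_n(x-z)\bigl(f(y)-f(z)\bigr)\,dz$ to express the difference as a double mollification of the increments of $f$, followed by uniform continuity on the scale $2/n$ and Tonelli's theorem against the total variation $|\mu|$, is precisely the standard argument given in that cited reference.
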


\begin{lemm}\label{lem3}\cite{Constantin.Molinet}
Let $f : \mathbb{R} \rightarrow \mathbb{R}$ be uniformly continuous and bounded. If $g\in L^{\infty}(\mathbb{R})$, then
$$[\rho_{n}\ast(f\mu)-(\rho_{n}\ast(f))(\rho_{n}\ast(\mu))]\rightarrow 0 ~~\text{in}~~ L^\infty(\mathbb{R})~ ~\text{as}~ ~n \rightarrow \infty.$$
\end{lemm}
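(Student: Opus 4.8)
The final statement (Lemma~\ref{lem3}) should be read with $g$ in place of the apparent typo $\mu$, so that the hypothesis $g\in L^\infty(\mathbb{R})$ matches; it then asserts that mollification asymptotically commutes with multiplication by the uniformly continuous bounded function $f$, now measured in $L^\infty$ rather than the $L^1$ of Lemma~\ref{lem2}. The plan is to reduce the commutator to a single symmetric double integral in which the regularity of $f$ and the boundedness of $g$ can be exploited separately.

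First I would write the two terms as integrals against $\rho_n$. Since $\int_{\mathbb{R}}\rho_n=1$, I can insert a trivial factor of $1$ into the convolution $\rho_n\ast(fg)$ to express it as a double integral, doing this in two ways (naming the active variable $y$ in one and $z$ in the other) and averaging. Averaging the representation of $(\rho_n\ast f)(\rho_n\ast g)$ in the same fashion, the difference collapses to
$$(\rho_n\ast(fg))(x)-(\rho_n\ast f)(x)(\rho_n\ast g)(x)=\frac{1}{2}\int_{\mathbb{R}}\int_{\mathbb{R}}\rho_n(x-y)\rho_n(x-z)\,[f(y)-f(z)]\,[g(y)-g(z)]\,dy\,dz,$$
the bracketed product arising from the elementary identity $f(y)g(y)+f(z)g(z)-f(y)g(z)-f(z)g(y)=[f(y)-f(z)][g(y)-g(z)]$. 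This symmetrization is the crux: it moves a difference onto $f$, where uniform continuity is available, while leaving only a difference of $g$, which can be bounded crudely.

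Next I would estimate. On the support of the integrand one has $|x-y|\le 1/n$ and $|x-z|\le 1/n$, hence $|y-z|\le 2/n$. Bounding $|g(y)-g(z)|\le 2\|g\|_{L^\infty}$ and using the uniform continuity of $f$ to choose, for given $\varepsilon>0$, an $n$ large enough that $|y-z|\le 2/n$ forces $|f(y)-f(z)|<\varepsilon$, I obtain
$$\bigl|(\rho_n\ast(fg))(x)-(\rho_n\ast f)(x)(\rho_n\ast g)(x)\bigr|\le \varepsilon\,\|g\|_{L^\infty}\int_{\mathbb{R}}\int_{\mathbb{R}}\rho_n(x-y)\rho_n(x-z)\,dy\,dz=\varepsilon\,\|g\|_{L^\infty},$$
uniformly in $x$. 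Taking the supremum over $x$ and letting $\varepsilon\to 0$ yields the claimed $L^\infty$ convergence.

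The argument is essentially free of obstacles once the symmetric representation is found; the one point requiring care is that \emph{uniform} continuity of $f$, not mere continuity, is genuinely used, since the bound must hold uniformly in $x$ as $n\to\infty$. I note that Lemma~\ref{lem2} follows from the identical identity, now integrating the absolute value over $\mathbb{R}$ against the finite measure $\mu$ and invoking the same support bound $|y-z|\le 2/n$, so the symmetrization step is the single idea underlying both results.
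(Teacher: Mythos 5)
Your proof is correct, but there is nothing in the paper to compare it against: Lemma \ref{lem3} is stated as a quotation from \cite{Constantin.Molinet} (typo included --- the display still carries the $\mu$ of Lemma \ref{lem2} while the hypothesis names $g$), and the paper supplies no proof of its own. Your reading of the misprint, with $g\in L^{\infty}(\mathbb{R})$ in place of $\mu$, is the intended one. The symmetrized identity you use is easily checked by inserting $\int_{\mathbb{R}}\rho_n(x-z)\,dz=1$ and averaging over the two labelings, and the rest of the estimate is sound: the support of $\rho_n$ forces $|y-z|\le 2/n$, uniform continuity of $f$ gives $|f(y)-f(z)|<\varepsilon$ there, $|g(y)-g(z)|\le 2\|g\|_{L^{\infty}}$ holds a.e.\ (which is all the integral bound needs), and the product of the two kernels integrates to $1$, so the commutator is at most $\varepsilon\|g\|_{L^{\infty}}$ uniformly in $x$. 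Two small remarks. First, the symmetrization, while elegant, is more than is needed: the one-sided identity $\rho_n\ast(fg)(x)-(\rho_n\ast f)(x)(\rho_n\ast g)(x)=\int_{\mathbb{R}}\int_{\mathbb{R}}\rho_n(x-y)\rho_n(x-z)\,[f(y)-f(z)]\,g(y)\,dz\,dy$ already yields the same bound with the crude estimate $|g(y)|\le\|g\|_{L^{\infty}}$, and this is essentially how such commutators are handled in \cite{Constantin.Molinet}. Second, your closing claim that Lemma \ref{lem2} follows from the ``identical identity'' is slightly loose: when one factor is a finite measure the two dummy variables are no longer interchangeable, so the product $[f(y)-f(z)][g(y)-g(z)]$ never forms; what survives is exactly the one-sided identity above with $g(y)\,dy$ replaced by $d\mu(y)$, whose absolute value is then integrated in $x$ against Lebesgue measure. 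Neither remark affects the correctness of your proof of the statement at hand.
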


\begin{lemm}\label{lem9}\cite{lions1,gy1}
Let $f \in W^{1,p}(\mathbb{R})$ and $g\in L^{q}(\mathbb{R})$, with $1\leq q\leq \infty$,
then
$$[\rho_{n}\ast(fg')-f(\rho_{n}\ast g)']\rightarrow 0 ~~\text{in}~~ L^r(\mathbb{R}),~~\text{where} ~~\frac{1}{r}=\frac{1}{p}+\frac{1}{q}.$$
\end{lemm}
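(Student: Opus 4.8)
This is the classical Friedrichs--DiPerna--Lions commutator lemma, so the plan is to reproduce its standard proof. Write $c=\int_{\mathbb{R}}\rho(\xi)\,d\xi$, so $\rho_{n}(x)=\frac{n}{c}\rho(nx)$ and $\int_{\mathbb{R}}\rho_{n}=1$, and set
\[
R_{n}:=\rho_{n}\ast(fg')-f\,(\rho_{n}\ast g)'.
\]
The first step is to recast $R_{n}$ as a single explicit integral. Since convolution commutes with differentiation, $(\rho_{n}\ast g)'=\rho_{n}'\ast g$; moreover the distributional product rule $fg'=(fg)'-f'g$ combined with $\partial_{x}(\rho_{n}\ast(fg))=\rho_{n}'\ast(fg)$ gives $\rho_{n}\ast(fg')=\rho_{n}'\ast(fg)-\rho_{n}\ast(f'g)$. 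Subtracting and merging the two terms that carry the kernel $\rho_{n}'$ produces
\[
R_{n}(x)=\int_{\mathbb{R}}\rho_{n}'(x-y)\,[f(y)-f(x)]\,g(y)\,dy-\rho_{n}\ast(f'g)(x).
\]
This identity is the heart of the matter: both terms are now honest functions lying in $L^{r}$.

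Next I would prove a bound on $\|R_{n}\|_{L^{r}}$ uniform in $n$. As $f\in W^{1,p}(\mathbb{R})$ has an absolutely continuous representative in one dimension, I may write $f(x-z)-f(x)=-z\int_{0}^{1}f'(x-\theta z)\,d\theta$; the substitution $y=x-z$ then turns the first integral into $-\int_{0}^{1}\!\int_{\mathbb{R}}z\rho_{n}'(z)\,f'(x-\theta z)\,g(x-z)\,dz\,d\theta$. Minkowski's integral inequality, Hölder's inequality with exponents tied by $\frac{1}{r}=\frac{1}{p}+\frac{1}{q}$, and translation invariance give
\[
\|R_{n}\|_{L^{r}}\le\big(\|z\rho_{n}'\|_{L^{1}}+1\big)\,\|f'\|_{L^{p}}\,\|g\|_{L^{q}},
\]
while the rescaling $w=nz$ shows $\|z\rho_{n}'\|_{L^{1}}=\|w\rho'\|_{L^{1}}/c$ is finite and independent of $n$. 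The same computation, via integration by parts, records the key moment $\int_{\mathbb{R}}z\rho_{n}'(z)\,dz=-1$, so that $z\rho_{n}'(z)$ acts as an approximate identity of total mass $-1$ concentrating at the origin.

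The last step is to identify the limit. For $f,g\in C_{c}^{\infty}(\mathbb{R})$ the moment identity together with dominated convergence forces the first term to converge to $f'g$ and the mollification $\rho_{n}\ast(f'g)$ to converge to $f'g$, both in $L^{r}$, so that $R_{n}\to f'g-f'g=0$. For general $f\in W^{1,p}$ and $g\in L^{q}$ I would approximate by smooth compactly supported functions and pass to the limit using the linearity of $(f,g)\mapsto R_{n}$ together with the uniform bound just established.

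The main obstacle is precisely the low regularity, which forces the singular kernel $\rho_{n}'$ into the analysis: its $L^{1}$ norm diverges like $n$, and only the cancellation $f(y)-f(x)=O(|y-x|)$ married to the moment $\int z\rho_{n}'=-1$ keeps $R_{n}$ bounded. A secondary subtlety is the endpoint case $p=\infty$ or $q=\infty$, where $C_{c}^{\infty}$ is not dense; there the density argument must be run only on the finite-exponent factor, the remaining factor being controlled through the uniform $L^{r}$ estimate rather than by norm approximation.
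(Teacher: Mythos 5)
The paper never proves Lemma \ref{lem9}: it is quoted verbatim from \cite{lions1,gy1}, so there is no internal argument to compare against, and what you have written is precisely the classical DiPerna--Lions commutator proof found in those sources. Its core is correct: the rewriting of the commutator as $R_n(x)=\int_{\mathbb{R}}\rho_n'(x-y)[f(y)-f(x)]g(y)\,dy-\rho_n\ast(f'g)(x)$, the uniform bound $\|R_n\|_{L^r}\le(\|z\rho_n'(z)\|_{L^1}+1)\|f'\|_{L^p}\|g\|_{L^q}$ with $\|z\rho_n'(z)\|_{L^1}$ invariant under the scaling, the moment identity $\int_{\mathbb{R}}z\rho_n'(z)\,dz=-1$, the smooth case by dominated convergence, and the conclusion by bilinearity plus density when $p,q<\infty$. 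One point worth making explicit: for $f\in W^{1,p}$, $g\in L^q$ the symbol $fg'$ has no a priori meaning, and your ``distributional product rule'' is really the \emph{definition} $fg':=(fg)'-f'g$, legitimate because $W^{1,p}(\mathbb{R})\hookrightarrow L^\infty(\mathbb{R})$ in one dimension, so both $fg$ and $f'g$ are locally integrable.

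The one genuine thin spot is the endpoint case, and it is not a side issue here: the paper invokes the lemma exactly at an endpoint (Step 4 of Theorem \ref{THM1} takes $f=u\in W^{1,\infty}(\mathbb{R})$ and $g=u\in L^2(\mathbb{R})$, i.e.\ $p=\infty$, $q=2$, $r=2$). Your prescription --- approximate only the finite-exponent factor and ``control the remaining factor through the uniform $L^r$ estimate'' --- is the right strategy but is not yet a proof: after replacing $g$ by a smooth compactly supported $g_k$, the uniform estimate only \emph{bounds} $R_n[f,g_k]$, it does not send it to zero, and your smooth-case limit argument does not apply as written because $f'$ is merely in $L^\infty$ and not continuous. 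The missing step can be supplied: substituting $y=x-z$ and splitting $g_k(x-z)=g_k(x)+[g_k(x-z)-g_k(x)]$ in the kernel term yields
\[
R_n[f,g_k]=g_k\,(\rho_n\ast f')-\rho_n\ast(f'g_k)+E_n,
\qquad
\|E_n\|_{L^q}\le C(f,g_k)\,n^{-1},
\]
since $\int_{\mathbb{R}}z^2|\rho_n'(z)|\,dz=O(n^{-1})$ and $E_n$ is supported in a fixed compact set; and
$g_k(\rho_n\ast f')-\rho_n\ast(f'g_k)=\int_{\mathbb{R}}\rho_n(z)\,[g_k(\cdot)-g_k(\cdot-z)]\,f'(\cdot-z)\,dz\to0$ in $L^q$ by the uniform continuity and compact support of $g_k$. (This auxiliary estimate is in the same spirit as Lemmas \ref{lem2} and \ref{lem3} of the paper, where exactly one factor is uniformly continuous.) Note finally that your argument genuinely requires at least one of $p,q$ to be finite; the doubly infinite case $p=q=\infty$, $r=\infty$ is beyond its reach, and beyond the way the lemma is used.
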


\begin{lemm} \cite{Natanson}Helly's theorem

Assume \,$F$\, is a family of real functions defined on \,$[a,b]$. For any \,$f\in F$\, satisfies \,$|f|\leq K$\,
and \,$V(f)\leq K$\, with the some constant \,$K\geq0$.
Then there exists a family of functions \,$\{f_{n}\}_{n=1}^{\infty}\subset F$\, such that
$$\lim_{n\rightarrow\infty}f_{n}(t)=g(t) ~~and~~g(t)\in BV[a,b],
$$
for any \,$t\in[a,b]$.
\end{lemm}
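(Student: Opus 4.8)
The plan is to reduce the statement to Helly's selection theorem for monotone functions via a Jordan-type decomposition, and then carry out a diagonal extraction on a countable dense set. I would first fix a countable sequence $(f_n)$ drawn from $F$ and, for each $n$, introduce its variation function $v_n(x) := V_a^x(f_n)$, the total variation of $f_n$ on $[a,x]$. Because $V(f_n) \le K$ and $|f_n| \le K$ hold uniformly in $n$, both $v_n$ and $w_n := v_n - f_n$ are nondecreasing on $[a,b]$ and uniformly bounded, with $f_n = v_n - w_n$. This reduces matters to the following claim: every uniformly bounded sequence of nondecreasing functions on $[a,b]$ admits a pointwise convergent subsequence whose limit is again nondecreasing.

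To prove the claim I would enumerate a countable dense subset $D = \{r_1, r_2, \dots\}$ of $[a,b]$ containing the endpoints, and combine the Bolzano–Weierstrass theorem with Cantor's diagonal argument to extract a subsequence $(h_{n_k})$ converging at every point of $D$ to a function that is nondecreasing on $D$. I would then extend this limit to the whole interval by monotone interpolation, setting $g(x) := \sup\{\, \lim_k h_{n_k}(r) : r \in D,\ r \le x \,\}$, which is nondecreasing on $[a,b]$, and verify that $h_{n_k}(x) \to g(x)$ at every continuity point $x$ of $g$. Since a monotone function has at most countably many discontinuities, a further diagonal extraction over this countable exceptional set upgrades the convergence to hold at every point of $[a,b]$.

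Applying this claim twice—first to $(v_n)$, then to the induced subsequence of $(w_n)$—produces a single subsequence $(f_{n_k})$ along which both $v_{n_k}$ and $w_{n_k}$ converge pointwise to nondecreasing limits $v$ and $w$. Hence $f_{n_k} = v_{n_k} - w_{n_k} \to v - w =: g$ pointwise on $[a,b]$, and as the difference of two bounded nondecreasing functions, $g \in BV[a,b]$, which is what we want. I expect the main obstacle to be the final part of the monotone claim: passing from convergence on the dense set $D$ to convergence at every point of $[a,b]$, where the countably many jump points of the limit must be treated separately by the second diagonal extraction, and where one must check that monotonicity is preserved in the limit.
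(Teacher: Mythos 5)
Your proof is correct: the reduction via the Jordan decomposition $f_n = v_n - w_n$ with $v_n(x)=V_a^x(f_n)$, the diagonal extraction on a countable dense set, the monotone interpolation of the limit, and the second diagonal extraction over the countably many discontinuities of the monotone limit together constitute the classical proof of Helly's selection theorem. Note that the paper itself gives no proof of this lemma --- it is quoted from the cited reference (Natanson, \emph{Theory of Functions of a Real Variable}) --- and your argument is essentially the standard one found there, so there is nothing substantive to compare beyond observing that you have supplied in full the proof the paper outsources to the literature.
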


\section{Global existence and blow-up phenomena}

In this section, we begin by deriving the relationship between  the Degasperis-Procesi equation and (\ref{E1}). Using this relationship, we then obtain the global existence results and the blow-up results for the system (\ref{E1}).

We can rewrite the Degasperis-Procesi equation as
\begin{align}
(1-\partial_{x^2})(\frac{1}{2}\widetilde{u}_{t})=\partial_{x}(4-\partial_{x}^2)(\frac{1}{2}\widetilde{u})^2.
\end{align}
It is easy to see that the Degasperis-Procesi equation transforms into the  equation (\ref{E1}) under the transformation
$$\widetilde{u}\rightarrow 2(2-\partial_{x})u.$$

Applying the above relation and the theorems in \cite{liu-yin1}, we can obtain that

\begin{theo}\label{thm}
Assume $u_{0}\in H^s (\mathbb{R})$, $s > \frac{5}{2}$. If $\widetilde{m_{0}} =2(2-\partial_{x})m_{0}=2(2-\partial_{x})(u_{0}-u_{0xx})$ does not change sign on $\mathbb{R}$, then (\ref{E1}) has a global strong solution $u = u(\cdot, u_0) \in C([0,\infty); H^s (\mathbb{R})) \cap C^1([0,\infty); H^{(s-1)}(\mathbb{R}))$.
\end{theo}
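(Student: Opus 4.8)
The plan is to exploit the explicit transformation $\widetilde{u} \rightarrow 2(2-\partial_x)u$ that converts the Degasperis-Procesi equation into \eqref{E1}, and transport the known global existence theorem for DP (from \cite{liu-yin1}) to our setting. First I would set up the correspondence precisely: given $u_0 \in H^s(\mathbb{R})$ with $s > \frac{5}{2}$, define $\widetilde{u}_0 = 2(2-\partial_x)u_0$. Since differentiation costs one derivative, $\widetilde{u}_0 \in H^{s-1}(\mathbb{R})$ with $s-1 > \frac{3}{2}$, which lands exactly in the regularity class where the DP global existence result applies. The hypothesis that $\widetilde{m}_0 = (1-\partial_x^2)\widetilde{u}_0 = 2(2-\partial_x)(u_0 - u_{0xx})$ does not change sign on $\mathbb{R}$ is precisely the sign condition under which DP admits a global strong solution; so I would first invoke \cite{liu-yin1} to produce a global solution $\widetilde{u} \in C([0,\infty); H^{s-1}) \cap C^1([0,\infty); H^{s-2})$ of the DP equation.

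The second step is to \emph{invert} the transformation and recover $u$. The relation $\widetilde{u} = 2(2-\partial_x)u = 4u - 2u_x$ is a first-order linear ODE in $x$ for $u$ at each fixed $t$, and I would solve it by an integrating factor: $u(t,x) = -\frac{1}{2}e^{2x}\int_x^{\infty} e^{-2y}\,\widetilde{u}(t,y)\,dy$ (choosing the decaying branch so that $u(t,\cdot) \in H^s$), which is equivalent to convolving $\widetilde{u}$ against the appropriate Green's kernel for the operator $2(2-\partial_x)$. The key regularity bookkeeping here is that this inversion \emph{gains} one derivative: if $\widetilde{u}(t,\cdot) \in H^{s-1}$ then $u(t,\cdot) \in H^s$, and the time-continuity class lifts correspondingly from $C([0,\infty);H^{s-1})$ to $C([0,\infty);H^s)$, with the $C^1$-in-time statement following by differentiating the ODE relation in $t$ and using the DP equation for $\widetilde{u}_t$.

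The third step is to verify that the reconstructed $u$ actually solves \eqref{E1} with the correct initial data. This is a direct computation: apply $(1-\partial_x^2)\partial_t$ to the relation $u = $ (kernel)$\,\ast\,\widetilde{u}$ and substitute the DP equation in the form $(1-\partial_x^2)(\tfrac{1}{2}\widetilde{u}_t) = \partial_x(4-\partial_x^2)(\tfrac{1}{2}\widetilde{u})^2$; since $\widetilde{u} = 2(2-\partial_x)u$, the right-hand side collapses to $\partial_x(2+\partial_x)[(2-\partial_x)u]^2$, recovering \eqref{E1}. The initial condition is matched because $\widetilde{u}(0,\cdot) = \widetilde{u}_0 = 2(2-\partial_x)u_0$ inverts back to $u(0,\cdot) = u_0$. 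Uniqueness of the strong solution in the stated class follows from the uniqueness in Theorem~\ref{Thm1} (local well-posedness), so the constructed global solution coincides with the maximal one, which is therefore global.

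The main obstacle I anticipate is \emph{not} the algebra of the transformation but the \textbf{invertibility and regularity matching} of the map $2(2-\partial_x)$ on the relevant function spaces. Specifically, one must check that the ODE inversion produces a genuinely $H^s(\mathbb{R})$ function with the right decay (so that $u, u_x \in L^2$ and the Green's kernel integral converges), and that the map $u \mapsto \widetilde{u}$ is a bijection between the two solution classes that intertwines the two flows for \emph{all} time, not merely initially. A subtle point worth pinning down is that the sign condition on $\widetilde{m}_0$ is preserved under the DP flow (this is standard for DP and is exactly what drives global existence there), and that this is compatible with the sign/structure properties of $u$ recorded after Theorem~\ref{Thm1}; verifying this compatibility is where the real content lies, with everything else being routine once the equivalence of the two Cauchy problems is established.
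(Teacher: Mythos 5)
Your proposal takes essentially the same route as the paper, whose entire proof consists of observing that $\widetilde{u}=2(2-\partial_{x})u$ converts solutions of (\ref{E1}) into solutions of the Degasperis-Procesi equation and then invoking the global existence theorem of \cite{liu-yin1}; your write-up simply fills in the same steps (regularity bookkeeping $H^{s}\leftrightarrow H^{s-1}$, inversion of $2(2-\partial_{x})$, verification that the transformed equation is DP). The only slip is a sign in your integrating-factor formula, which should read $u(t,x)=+\tfrac{1}{2}e^{2x}\int_{x}^{\infty}e^{-2y}\widetilde{u}(t,y)\,dy$ since $(e^{-2x}u)_{x}=-\tfrac{1}{2}e^{-2x}\widetilde{u}$; this does not affect the argument.
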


\begin{theo}
Assume $u_0
\in H^s (\mathbb{R})$, $s > \frac{5}{2}$. Then there exists $x_{0} \in \mathbb{R}$ such that
\begin{align}
\nonumber &\widetilde{m_{0}}(x)
=2(2-\partial_{x})m_{0}=2(2-\partial_{x})(u_{0}-u_{0xx}) \leq 0 ~~~~~~\text{if}~~ x \leq x_{0},
\\\nonumber
&\widetilde{m_{0}}(x)
=2(2-\partial_{x})m_{0}=2(2-\partial_{x})(u_{0}-u_{0xx}) \geq 0~~~~~~ \text{if}~~ x \geq x_{0}.
\end{align}
Then (\ref{E1}) has a unique global strong solution
$u = u(\cdot, u_{0}) \in C([0,\infty); H^s (\mathbb{R})) \cap C^1([0,\infty); H^{s-1}(\mathbb{R}))$.

\end{theo}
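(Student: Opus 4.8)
**The plan is to exploit the transformation relating (\ref{E1}) to the Degasperis–Procesi (DP) equation, together with the corresponding global existence theorem for DP under the sign-change condition on the initial momentum.** The key observation, already recorded in this section, is that under $\widetilde{u} \mapsto 2(2-\partial_x)u$ the DP equation is carried into (\ref{E1}), and correspondingly the DP momentum $\widetilde{m} = \widetilde{u} - \widetilde{u}_{xx}$ is related to $m = u - u_{xx}$ by $\widetilde{m} = 2(2-\partial_x)m$. The hypothesis is exactly that $\widetilde{m}_0$ is nonpositive to the left of some $x_0$ and nonnegative to its right. This is precisely the sign-profile condition appearing in the DP global existence theory of \cite{liu-yin1}, so the natural strategy is to transfer that result rather than to argue directly on (\ref{E1}).

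First I would set up the correspondence carefully at the level of initial data: given $u_0 \in H^s$ with $s>\tfrac52$, form $\widetilde{u}_0 = 2(2-\partial_x)u_0 \in H^{s-1}$ and verify that $\widetilde{u}_0$ satisfies the hypotheses of the relevant DP theorem in \cite{liu-yin1}, namely that $\widetilde{m}_0$ changes sign exactly once, from $-$ to $+$, across $x_0$. Since the DP global existence result for data in $H^{s'}$, $s'>\tfrac32$, applies here with $s' = s-1 > \tfrac32$, I would invoke it to obtain a unique global strong solution $\widetilde{u}(t,\cdot) \in C([0,\infty);H^{s-1}) \cap C^1([0,\infty);H^{s-2})$ of the DP equation with initial datum $\widetilde{u}_0$.

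Next I would recover $u$ from $\widetilde{u}$ by inverting the transformation. The operator $2(2-\partial_x)$ is, up to constants, a first-order differential operator, so inverting it amounts to solving the linear ODE $2(2-\partial_x)u = \widetilde{u}$ in $x$ for each fixed $t$; equivalently $u = \tfrac12 (2-\partial_x)^{-1}\widetilde{u}$, where $(2-\partial_x)^{-1}$ is realized by convolution against the decaying Green's function $e^{2x}\mathbf{1}_{x<0}$ (or its appropriate sign). I would check that this convolution maps the DP solution space into the claimed solution space for $u$, i.e. that it gains one derivative and preserves the time-regularity, yielding $u \in C([0,\infty);H^s)\cap C^1([0,\infty);H^{s-1})$, and that the resulting $u$ indeed solves (\ref{E1}) by running the equivalence of the two equations forward in time, not merely at $t=0$. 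Uniqueness for (\ref{E1}) then follows from uniqueness for DP together with the invertibility of the transformation.

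\textbf{The main obstacle is the invertibility and regularity bookkeeping of the transformation $\widetilde{u} = 2(2-\partial_x)u$.} Because $2-\partial_x$ is a first-order operator, its inverse is only conditionally defined on $L^2$-based spaces: the kernel $e^{2x}$ grows at one end, so one must select the decaying Green's function and confirm that the convolution is bounded on $H^{\sigma}(\mathbb{R})$ and actually gains a derivative. I would need to verify that $\widetilde{u}(t,\cdot)$ has sufficient decay for this convolution to be well-defined and to land in $H^s$, and to confirm that the sign condition on $\widetilde{m}_0$ in the statement is the correct translate of the one-sign-change hypothesis required by the cited DP theorem. Once the functional-analytic mapping properties of $(2-\partial_x)^{-1}$ are pinned down, the remainder is a routine transfer of the DP global existence and uniqueness statement through the change of variables.
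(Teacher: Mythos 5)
Your proposal is correct and follows essentially the same route as the paper, which obtains this theorem exactly by pushing the initial data through the transformation $\widetilde{u}=2(2-\partial_{x})u$ and invoking the Degasperis--Procesi global existence theorem of \cite{liu-yin1} for momentum changing sign once from $-$ to $+$, with uniqueness transferred back through the injectivity of $2(2-\partial_{x})$. The only comment worth adding is that the ``main obstacle'' you flag is not one: since the symbol $2-i\xi$ of $2-\partial_{x}$ never vanishes, $(2-\partial_{x})^{-1}$ is convolution with the $L^{1}$ kernel $e^{2x}\mathbf{1}_{\{x<0\}}$ and is a bounded isomorphism from $H^{\sigma}(\mathbb{R})$ onto $H^{\sigma+1}(\mathbb{R})$ for every $\sigma$, so no decay hypotheses on $\widetilde{u}$ are required and the regularity bookkeeping is immediate.
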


\begin{theo}
 Let $u_{0}\in H^s (\mathbb{R}),~ s > \frac{5}{2}$. Assume there exists $x_{0} \in \mathbb{R}$ such that
\begin{align}
\nonumber &\widetilde{m_{0}}(x)
=2(2-\partial_{x})m_{0}=2(2-\partial_{x})(u_{0}-u_{0xx})  \geq 0 ~~~~~~\text{if}~~ x \leq x_{0},\\\nonumber
&\widetilde{m_{0}}(x)
=2(2-\partial_{x})m_{0}=2(2-\partial_{x})(u_{0}-u_{0xx})  \leq 0~~~~~~ \text{if}~~ x \geq x_{0}.
\end{align}
and $m_{0}$ changes sign. Then, the corresponding solution to (\ref{E1}) blows up in finite time.
\end{theo}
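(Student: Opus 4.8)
The plan is to transfer the problem to the Degasperis--Procesi (DP) equation through the substitution recorded at the start of this section. Setting $\tilde u = 2(2-\partial_x)u$, a solution $u$ of (\ref{E1}) corresponds to a solution $\tilde u$ of the DP equation, with DP momentum $\tilde m = (1-\partial_x^2)\tilde u = 2(2-\partial_x)m$. Because the symbol $2-i\xi$ of $2-\partial_x$ is bounded below by $2$ and grows like $|\xi|$, the operator $2-\partial_x$ is a bounded isomorphism from $H^{s}(\mathbb{R})$ onto $H^{s-1}(\mathbb{R})$ with bounded inverse; hence $u\in C([0,T);H^{s})$ and $\tilde u\in C([0,T);H^{s-1})$ share the same maximal existence time, and it suffices to show that $\tilde u$ blows up in finite time. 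Since $s-1>\frac32$, the data $\tilde u_0$ lies in the range where the DP blow-up theory of \cite{liu-yin1} applies.

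The first substantive step is to check that $\tilde m_0$ meets the hypotheses of the DP blow-up theorem in \cite{liu-yin1}. The prescribed sign pattern, $\tilde m_0\ge0$ for $x\le x_0$ and $\tilde m_0\le0$ for $x\ge x_0$, is exactly the DP condition, so the only point requiring work is to promote the assumption that $m_0$ changes sign to the statement that $\tilde m_0$ \emph{genuinely} changes sign, which is what the DP theorem needs. I would do this with an integrating factor. Writing $\tilde m_0 = 4m_0 - 2m_{0,x}$, the inequality $\tilde m_0\le 0$ on all of $\mathbb{R}$ is equivalent to $(e^{-2x}m_0)'\ge 0$, so $e^{-2x}m_0$ is nondecreasing; as $s>\frac52$ gives $m_0\in H^{s-2}\hookrightarrow C_0(\mathbb{R})$, this product tends to $0$ as $x\to+\infty$, forcing $m_0\le 0$ everywhere. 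Symmetrically, $\tilde m_0\ge 0$ everywhere forces $m_0\ge 0$ everywhere. Taking contrapositives, the hypothesis that $m_0$ changes sign excludes both one-signed alternatives for $\tilde m_0$, so $\tilde m_0$ is strictly positive somewhere on $(-\infty,x_0]$ and strictly negative somewhere on $[x_0,\infty)$, i.e. it changes sign in the required way.

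With the hypotheses in place, \cite{liu-yin1} furnishes a finite time $T^{*}$ at which $\liminf_{t\to T^{*}}\inf_{x\in\mathbb{R}}\tilde u_x(t,x)=-\infty$. To pass this back to $u$ I would argue by contradiction: if $u$ were global, it would be bounded in $H^{s}$ on $[0,T^{*}]$, and since $s>\frac52$ yields $u_x\in H^{s-1}\hookrightarrow L^{\infty}$ and $u_{xx}\in H^{s-2}\hookrightarrow L^{\infty}$, the quantity $\tilde u_x = 4u_x - 2u_{xx}$ would stay bounded in $L^{\infty}$ on $[0,T^{*}]$, contradicting the DP breakdown. Therefore the maximal existence time of $u$ is finite and the solution blows up.

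The main obstacle is the sign-transfer step, especially in the low-regularity window $\frac52<s\le\frac72$, where $m_0$ need not be $C^{1}$ and $m_{0,x}$ exists only as a distribution. I expect to resolve this by reading $\tilde m_0\le 0$ distributionally, which says exactly that the continuous function $e^{-2x}m_0$ is monotone in the weak sense, so the pointwise sign conclusion still holds; alternatively one can prove the implication for the mollifications $\rho_n\ast m_0$ built from (\ref{l41}) and let $n\to\infty$. Once this lemma and the isomorphism property are granted, the remaining transfer of blow-up is routine.
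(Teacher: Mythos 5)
Your strategy coincides with the paper's: the paper "proves" this theorem only by the remark that the substitution $\widetilde u = 2(2-\partial_x)u$ turns (\ref{E1}) into the Degasperis--Procesi equation, after which the theorems of \cite{liu-yin1} are invoked. Your supplements --- the isomorphism property of $2-\partial_x$, the transfer of breakdown back to $u$, and especially the integrating-factor lemma showing that a sign change of $m_0$ forces a sign change of $\widetilde m_0$ (needed because \cite{liu-yin1} assumes the DP momentum itself changes sign, while the hypothesis here concerns $m_0$) --- are correct and fill real omissions in the paper.

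However, the step ``the prescribed sign pattern is exactly the DP condition'' conceals a genuine gap: an orientation mismatch. Expanding the paper's rewritten form, $\widetilde u = 2(2-\partial_x)u$ satisfies
\begin{equation*}
(1-\partial_x^2)\widetilde u_t = 4\widetilde u\widetilde u_x - 3\widetilde u_x\widetilde u_{xx} - \widetilde u\widetilde u_{xxx},
\qquad\text{i.e.}\qquad
\widetilde y_t - \widetilde u\,\widetilde y_x - 3\widetilde u_x\widetilde y = 0,\quad \widetilde y := \widetilde u - \widetilde u_{xx},
\end{equation*}
whereas \cite{liu-yin1} studies $u_t - u_{txx} + 4uu_x = 3u_xu_{xx} + uu_{xxx}$, i.e. $y_t + u y_x + 3u_x y = 0$. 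The two differ by the reflection $x\mapsto -x$: in the transformed equation the characteristics obey $\dot q = -\widetilde u$ with $\widetilde u = G\ast\widetilde y$, so positive momentum is transported to the \emph{left}, not to the right, and wave breaking corresponds to $\sup_x \widetilde u_x \to +\infty$ rather than $\inf_x \widetilde u_x \to -\infty$. To invoke \cite{liu-yin1} one must therefore pass to $\check u(t,x):=\widetilde u(t,-x)$, whose momentum is $\check y_0(x) = \widetilde m_0(-x)$; under the stated hypothesis this gives $\check y_0 \le 0$ for $x \le -x_0$ and $\check y_0 \ge 0$ for $x \ge -x_0$, which is precisely the \emph{global existence} configuration of \cite{liu-yin1} (negative momentum entirely to the left of positive momentum, a separating antipeakon--peakon pattern), not the blow-up configuration (positive to the left of negative). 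So the DP blow-up theorem simply does not apply to the sign pattern you (and the paper) feed it; a faithful transfer of \cite{liu-yin1} yields blow-up under the \emph{opposite} pattern, $\widetilde m_0 \le 0$ for $x\le x_0$ and $\widetilde m_0 \ge 0$ for $x\ge x_0$ together with a sign change. The paper makes the identical verbatim transfer, so the defect is arguably inherited from the statement itself --- the sign hypotheses of the global and blow-up theorems of this section appear to be interchanged --- but as a proof your argument breaks at exactly this matching step and cannot be completed as written.
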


\begin{theo}
Let $\varepsilon > 0$, $u_{0} \in H^s (\mathbb{R}), ~s > \frac{5}{2}$. Assume there is $x_{0} \in \mathbb{R}$ such that
$$\widetilde{u}'_{0}(x_{0})=2\partial_{x}(2-\partial_{x})u_{0}(x_{0}) < -\frac{(1 + \varepsilon)\sqrt{6}}{4}\bigg(\|\widetilde{u}_{0}\|_{L^\infty}
+(2\sqrt{6}\|\widetilde{u}_{0}\|^2_{L^2}\ln(1+\frac{2}{\varepsilon})
+\|\widetilde{u}_{0}\|^2_{L^\infty})^{\frac{1}{2}}
\bigg).$$
Then the corresponding solution to (\ref{E1}) blows up in finite time. Moreover, the
maximal time of existence is estimated above by
$$\frac{(2\sqrt{6}\|\widetilde{u}_{0}\|^2_{L^2}\ln(1+\frac{2}{\varepsilon})
+\|\widetilde{u}_{0}\|^2_{L^\infty})^{\frac{1}{2}}
-\|\widetilde{u}_{0}\|_{L^\infty}}{6\|\widetilde{u}_{0}\|^2_{L^2}}.$$
\end{theo}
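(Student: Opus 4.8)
The plan is to deduce this blow-up criterion from the corresponding wave-breaking result for the Degasperis--Procesi (DP) equation via the transformation recorded at the beginning of this section. Writing $\widetilde{u}:=2(2-\partial_x)u=4u-2u_x$, that computation shows $\widetilde{u}$ solves the standard DP equation $(1-\partial_x^2)\widetilde{u}_t=\partial_x(4-\partial_x^2)(\tfrac12\widetilde{u}^2)$, while $u$ is recovered from $\widetilde{u}$ through the Fourier multiplier $u=\tfrac12(2-\partial_x)^{-1}\widetilde{u}$, whose symbol $\tfrac{1}{2(2-i\xi)}$ is smooth and gains exactly one derivative. Hence $u\mapsto\widetilde{u}$ is a bijection between $H^s(\mathbb{R})$ and $H^{s-1}(\mathbb{R})$ with $\|\widetilde{u}\|_{H^{s-1}}$ comparable to $\|u\|_{H^s}$, so the Cauchy problem for (\ref{E1}) in $H^s$ and that for DP in $H^{s-1}$ are equivalent and share the same maximal existence time $T$.

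First I would check that the hypotheses transfer. Since $s>\tfrac52$ we have $\widetilde{u}_0=2(2-\partial_x)u_0\in H^{s-1}$ with $s-1>\tfrac32$, so the DP local theory applies to the datum $\widetilde{u}_0$. Next, observe that the scalar condition in the statement, namely $\widetilde{u}_0'(x_0)=2\partial_x(2-\partial_x)u_0(x_0)$ lying below $-\tfrac{(1+\varepsilon)\sqrt{6}}{4}(\cdots)$, is precisely the hypothesis of the DP blow-up theorem of \cite{liu-yin1} applied to $\widetilde{u}_0$; this is exactly why every norm appearing on the right-hand side is a norm of $\widetilde{u}_0$ rather than of $u_0$. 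Invoking that theorem then produces a finite $T$, bounded above by the displayed quantity, at which the DP solution $\widetilde{u}$ breaks in the sense that $\inf_{x\in\mathbb{R}}\widetilde{u}_x(t,x)\to-\infty$ as $t\to T^-$ while $\|\widetilde{u}(t)\|_{L^\infty}$ stays bounded.

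Finally I would transfer the singularity back to $u$. Because $s>\tfrac52$ gives the embedding $H^s\hookrightarrow C^2_b(\mathbb{R})$, if $u$ stayed bounded in $H^s$ up to $T$ then $\widetilde{u}_x=4u_x-2u_{xx}$ would stay bounded in $L^\infty$, contradicting the wave breaking of $\widetilde{u}$. Hence $\limsup_{t\to T^-}\|u(t)\|_{H^s}=\infty$, and by the equivalence above the breaking times coincide, which yields the same upper bound for the maximal existence time of $u$.

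The main obstacle is not any single estimate but making the reduction airtight: one must confirm that the paper's normalization of DP agrees with the one used in \cite{liu-yin1}, so that the constants (the factor $\sqrt{6}$, the coefficient $\tfrac14$, and the $\ln(1+\tfrac{2}{\varepsilon})$ term) carry over verbatim under the substitution $u_0\mapsto\widetilde{u}_0$; and one must pin down the precise sense of blow-up for DP (wave breaking of $\widetilde{u}_x$ rather than loss of boundedness of $\widetilde{u}$ itself) in order to justify the transfer to loss of $H^s$-regularity of $u$. Once these bookkeeping points are settled, the conclusion is immediate from \cite{liu-yin1}.
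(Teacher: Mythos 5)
Your proposal is correct and follows essentially the same route as the paper: the paper's entire argument consists of noting that the substitution $\widetilde{u}=2(2-\partial_{x})u$ turns (\ref{E1}) into the Degasperis--Procesi equation and then citing the blow-up theorem of \cite{liu-yin1} applied to $\widetilde{u}_{0}$, which is exactly your reduction. The details you supply (the isomorphism $H^{s}\leftrightarrow H^{s-1}$ via the multiplier $2-\partial_{x}$, the transfer of the wave-breaking singularity back to $u$ through the embedding $H^{s}\hookrightarrow C^{2}_{b}$, and the matching of maximal existence times) are precisely the bookkeeping the paper leaves implicit.
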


\section{Global weak solutions}
In this section, we will investigate the existence and uniqueness of global weak solutions of (\ref{E1}) under some sign condition.

\begin{theo}\label{THM1}
If $u_{0}\in H^{1}(\mathbb{R})$ is such that $m_{0}=u_{0}-u_{0xx}\in \{\mathcal{M}^{}(\mathbb{R})|(2-\partial_{x})m_{0}\in\mathcal{M}^{+}(\mathbb{R})\}$. Then (\ref{E1}) has a unique solution $u\in C^1(\mathbb{R}_{+}; L^{2}(\mathbb{R}))\cap C(\mathbb{R}_{+}; H^{1}(\mathbb{R}))\cap C_{w}(\mathbb{R}_{+}; H^{2}(\mathbb{R}))$ with initial data $u(0)=u_{0}$.
Moveover the total variation of $(2-\partial_{x})m(t,\cdot)=(2-\partial_{x})[u(t,\cdot)-u_{xx}(t,\cdot)]\in \mathcal{M}^{+}(\mathbb{R})$ is uniformly bounded on $\mathbb{R}_{+}$ and $E(u)=\int_{\mathbb{R}}(u^2+u_{x}^2)dx$ is conservation law.
\end{theo}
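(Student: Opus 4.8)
The plan is to prove this existence-and-uniqueness result for global weak solutions via the classical viscosity/mollification approximation scheme pioneered by Constantin and Molinet for the Camassa-Holm equation, adapted to the generalized equation~(\ref{E1}). First I would construct a sequence of smooth initial data by mollifying: set $u_0^n = \rho_n \ast u_0$, so that $u_0^n \in H^\infty(\mathbb{R})$ and $u_0^n \to u_0$ in $H^1(\mathbb{R})$. The crucial point is to verify that the sign condition is preserved under mollification, i.e.\ that $(2-\partial_x)m_0^n = \rho_n \ast [(2-\partial_x)m_0] \in \mathcal{M}^+(\mathbb{R})$ remains a nonnegative measure, which follows because convolution with the nonnegative kernel $\rho_n$ maps positive measures to positive functions. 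By Theorem~\ref{thm}, each such $u_0^n$ generates a global strong solution $u^n \in C([0,\infty); H^s) \cap C^1([0,\infty); H^{s-1})$, and the sign condition on $(2-\partial_x)m^n$ is propagated in time along the flow, as recorded in the conservation properties~(1)--(3) following Theorem~\ref{Thm1}.

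Next I would extract uniform a priori bounds independent of $n$. The conservation of $E(u^n) = \|u^n\|_{H^1}^2 = \|u_0^n\|_{H^1}^2$ gives a uniform $H^1$ bound, hence $\|u^n\|_{L^\infty}$ and $\|u^n_x\|_{L^\infty}$ are controlled (the latter via property~(3), $\|u^n_x\|_{L^\infty} \leq \|m_0^n\|_{L^1}$). The key quantitative estimate is the uniform bound on the total variation of the positive measure $(2-\partial_x)m^n(t,\cdot)$: since this is a nonnegative measure, its total variation equals its integral against~$1$, which by property~(2) is conserved and equals $\|(2-\partial_x)m_0\|_{L^1}$. With these bounds in hand, I would pass to the limit. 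Using the equation~(\ref{E2}) in nonlocal form, the time derivatives $u^n_t$ are bounded in suitable spaces, so an Aubin--Lions-type compactness argument (together with the Helly selection theorem for the BV-in-space measures) yields a subsequence converging to a limit $u$ in the relevant topologies, with $u \in C^1(\mathbb{R}_+; L^2) \cap C(\mathbb{R}_+; H^1) \cap C_w(\mathbb{R}_+; H^2)$. Lemmas~\ref{lem2}, \ref{lem3}, and~\ref{lem9} are precisely the tools needed to handle the products in the nonlinear and nonlocal terms when commuting mollifiers past the equation, ensuring the limit $u$ solves~(\ref{E1}) in the weak sense.

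The main obstacle, as usual in this circle of ideas, is uniqueness. Strong $H^1$ convergence is generally unavailable because of possible concentration in the measure $m$, so one cannot simply subtract two solutions and use Gronwall in $H^1$. The strategy I would adopt is to reformulate the equation in terms of the characteristics (the Lagrangian flow map), exploiting the positivity of $(2-\partial_x)m$ to obtain pointwise control: the sign condition forces the transported density to stay nonnegative, which gives monotonicity properties of the flow and prevents crossing of characteristics. Following the Constantin--Molinet approach, I would establish uniqueness by comparing two weak solutions through an auxiliary quantity (typically built from the difference of the two flows and the associated densities), deriving a closed differential inequality whose only solution with zero initial data is trivial. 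The delicate part is that the nonlinearity of~(\ref{E1}) differs from the CH equation — in particular the coefficient $-4uu_x$ and the $(2-\partial_x)$ structure — so the commutator estimates and the form of the transport equation for $m$ must be recomputed carefully, and verifying that the sign condition $(2-\partial_x)m_0 \in \mathcal{M}^+$ (rather than $m_0 \in \mathcal{M}^+$ as for CH) is exactly what closes the argument is where the real work lies.
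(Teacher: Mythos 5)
Your existence scheme (mollified data $u_0^n=\rho_n\ast u_0$, preservation of the sign condition under convolution, global strong solutions from Theorem \ref{thm}, uniform $H^1$ and total-variation bounds, Helly's theorem plus weak compactness, and the commutator lemmas to pass to the limit in the nonlinear terms) is essentially the paper's Steps 1--3, and is sound. However, you never prove the two claims of the theorem that do \emph{not} follow from soft compactness: that $E(u)$ is conserved for the limit function $u$, and hence that $u\in C(\mathbb{R}_{+};H^{1}(\mathbb{R}))$. Conservation of $E(u^n)$ for the approximants does not pass to a weak limit; weak convergence only yields $\|u(t)\|_{H^1}\leq \liminf_n\|u^n(t)\|_{H^1}$. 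The paper devotes its Step 4 precisely to this point, by a second regularization argument applied to the weak solution itself: convolve the equation with $\rho_n$, perform the $L^2$ energy estimates on $\rho_n\ast u$ and $\rho_n\ast u_x$, use Lemma \ref{lem9} together with the $L^\infty$ and $H^2$ bounds to show that the resulting error terms $G_n(t)$ tend to $0$ uniformly in $t$, and then integrate in time via Lemma \ref{lem1} and dominated convergence; continuity in $H^1$ is deduced from the conservation law. This step is absent from your proposal, yet it is part of the statement to be proved.

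The uniqueness part is where your proposal genuinely diverges and where it is incomplete. What you describe --- Lagrangian flow maps, comparison of characteristics, positivity preventing crossing --- is not the Constantin--Molinet uniqueness argument (and misattributing it matters, because their actual argument is the one this paper adapts). The paper's Step 5 is Eulerian: take two weak solutions $u,v$ in the class with uniformly bounded total variation of $f-f_{xx}$, set $w=u-v$, and derive, using the mollifier commutator lemmas and the uniform $L^\infty$, $L^1$ and $\mathcal{M}$ bounds encoded in the constant $M$, a differential inequality of the form
\[
\frac{d}{dt}\int_{\mathbb{R}}\big(|\rho_{n}\ast w|+|\rho_{n}\ast w_{x}|\big)\,dx
\;\leq\; C M \int_{\mathbb{R}}\big(|\rho_{n}\ast w|+|\rho_{n}\ast w_{x}|\big)\,dx + Q_{n}(t),
\]
where $Q_{n}(t)\to 0$ uniformly and is uniformly bounded; Gronwall's inequality, the limit $n\to\infty$, and $w(0)=0$ then force $w\equiv 0$. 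Note that what closes this argument is the uniform total-variation bound on $u-u_{xx}$ and $v-v_{xx}$ (i.e.\ membership in the solution class), not the positivity of $(2-\partial_x)m_0$ as you surmise: positivity is what guarantees global existence and propagates the uniform TV bound, but it plays no direct role in the contraction estimate. Your characteristics-based plan, by contrast, is a statement of intent rather than a proof: you would need to define and control the flow of a merely Lipschitz velocity field, show that the measure $(2-\partial_x)m$ is transported correctly in the weak formulation, and extract a contraction between two such Lagrangian descriptions --- none of which is carried out, as you yourself concede in saying this is ``where the real work lies.'' As written, the uniqueness half of the theorem remains unproved.
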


\begin{proof}
In order to prove Theorem \ref{THM1}, we proceed as the following five steps.\\
 {\bf Step 1:} We define a sequence of global solutions $u^{n}\in C([0,T]; H^3(\mathbb{R}))$ of (\ref{E1}) with a suitable approximation of the initial data $u_{0}\in  H^1(\mathbb{R})$ by smooth functions $u_{0}^{n}$.

Let $u_{0}\in H^{1}(\mathbb{R})$, $m_{0}=u_{0}-u_{0xx}\in \mathcal{M}(\mathbb{R}) $ and $(2-\partial_{x})m_{0}=(2-\partial_{x})u_{0}-(2-\partial_{x})u_{0xx}\in \mathcal{M}^{+}(\mathbb{R}).$

Define $u_{0}^{n}:=\rho_{n}\ast u_{0}\in H^{\infty}(\mathbb{R})$ for $n\leq1$. Then we have
\begin{align}\label{l10}
u_{0}^{n}\rightarrow u_{0}~~~\text{in} ~~H^{1}(\mathbb{R})~~~\text{as}~~n\rightarrow \infty.
\end{align}
Note that for $n\geq1$,
\begin{align}\label{l1}
\|(2-\partial_{x})u^{n}\|_{L^1(\mathbb{R})}&
=\|(2-\partial_{x})m^{n}\|_{L^1(\mathbb{R})}=\|(2-\partial_{x})m_{0}^{n}\|_{L^1(\mathbb{R})}
\leq \|2m_{0}-m_{0}'\|_{\mathcal{M}},
\\
\|u^{n}\|_{H^1(\mathbb{R})}&=\|u_{0}^{n}\|_{H^1(\mathbb{R})}\leq\|u_{0}\|_{H^1(\mathbb{R})},
\\\nonumber
\|(2-\partial_{x})u^{n}\|_{H^1(\mathbb{R})}&=\|G\ast (2-\partial_{x})m^{n}\|_{L^2(\mathbb{R})}
+\|G_{x}\ast (2-\partial_{x})m^{n}\|_{L^2(\mathbb{R})}
\\\nonumber &\leq\|G\|_{L^2(\mathbb{R})}\|(2-\partial_{x})m^{n}\|_{L^1(\mathbb{R})}
+\|G_{x}\|_{L^2(\mathbb{R})}\|(2-\partial_{x})m^{n}\|_{L^1(\mathbb{R})}
\\\nonumber &\leq2\|(2-\partial_{x})m_{0}^{n}\|_{L^1(\mathbb{R})}
\leq2\|2m_{0}-m_{0}'\|_{\mathcal{M}},
\\
(2-\partial_{x})m_{0}^{n}&=(2-\partial_{x})u_{0}^{n}-(2-\partial_{x})u_{0xx}^{n}
=\rho_{n}\ast(2-\partial_{x})m_{0}\geq0.
\end{align}
Then, we derive
\begin{align}\label{l27}
\nonumber \|u^{n}\|_{H^2(\mathbb{R})}=&\|u^{n}\|_{H^1(\mathbb{R})}+\|u_{x}^{n}\|_{H^1(\mathbb{R})}
\\\nonumber\leq&3\|u^{n}\|_{H^1(\mathbb{R})}+\|(2-\partial_{x})u^{n}\|_{H^1(\mathbb{R})}
\\ \leq &3\|u_{0}\|_{H^1(\mathbb{R})}+2\|2m_{0}-m_{0}'\|_{\mathcal{M}},
\\\nonumber\|\partial_{t}u^{n}\|_{L^2((0,T)\times\mathbb{R})}
=&\|4u^{n}u^{n}_x\|_{L^2((0,T)\times\mathbb{R})}+\|(u^{n}_{x})^2\|_{L^2((0,T)\times\mathbb{R})}
\\\nonumber&+\|G\ast\{\partial_x[(2u^{n}_x)^2+(6 u^{n})^{2}]
+(u^{n}_{x})^2\}\|_{L^2((0,T)\times\mathbb{R})}
\\\nonumber\leq &4\|u^{n}\|_{L^\infty((0,T)\times\mathbb{R})}\|u^{n}_x\|_{L^2((0,T)\times\mathbb{R})}
+6\|u^{n}\|^2_{L^2((0,T)\times\mathbb{R})}
\\\nonumber&+3\|u^{n}_{x}\|^2_{L^2((0,T)\times\mathbb{R})}
+\|u^{n}_{x}\|_{L^\infty((0,T)\times\mathbb{R})}\|u^{n}_{x}\|_{L^2((0,T)\times\mathbb{R})}
\\\nonumber\leq &4\|u^{n}\|_{L^\infty((0,T)\times\mathbb{R})}\|u^{n}\|_{L^2((0,T);H^1(\mathbb{R}))}
+6\|u^{n}\|^2_{L^2((0,T)\times\mathbb{R})}
\\\nonumber&+3\|u^{n}\|^2_{L^2((0,T);H^1(\mathbb{R}))}
+\|u^{n}\|_{L^\infty((0,T);H^2(\mathbb{R}))}\|u^{n}\|_{L^2((0,T);H^1(\mathbb{R}))}
\\\leq & C_{T} (16\|u_{0}\|^2_{H^1(\mathbb{R})}+2\|u_{0}\|_{H^1(\mathbb{R})}
\|2m_{0}-m_{0}'\|_{\mathcal{M}(\mathbb{R})}),
\end{align}
and
\begin{align}\label{029}
\nonumber\|\partial_{t}u_{x}^{n}\|_{L^2((0,T)\times\mathbb{R})}
=&\|(4u^{n}-2u_{x}^{n})u^{n}_{xx}\|_{L^2((0,T)\times\mathbb{R})}
+2\|(u^{n}_{x})^2\|_{L^2((0,T)\times\mathbb{R})}
+6\|(u^{n})^2\|_{L^2((0,T)\times\mathbb{R})}
\\\nonumber&+\|G\ast\{(2u^{n}_x)^2+(6 u^{n})^{2}
+\partial_x[(u^{n}_{x})^2]\}\|_{L^2((0,T)\times\mathbb{R})}
\\\nonumber\leq &\|4u^{n}-2u_{x}^{n}\|_{L^\infty((0,T)\times\mathbb{R})}
\|u^{n}_{xx}\|_{L^2((0,T)\times\mathbb{R})}
+6\|u^{n}\|^2_{L^2((0,T)\times\mathbb{R})}
\\\nonumber&+3\|u^{n}_{x}\|^2_{L^2((0,T)\times\mathbb{R})}
+2\|u^{n}_{x}\|_{L^\infty((0,T)\times\mathbb{R})}\|u^{n}_{x}\|_{L^2((0,T)\times\mathbb{R})}
\\\nonumber&
+6\|u^{n}\|_{L^2((0,T)\times\mathbb{R})}\|u^{n}\|_{L^\infty((0,T)\times\mathbb{R})}
\\\nonumber\leq &\|4u^{n}-2u_{x}^{n}\|_{L^\infty((0,T)\times\mathbb{R})}
\|u^{n}\|_{L^2((0,T);H^2(\mathbb{R}))}
+6\|u^{n}\|^2_{L^2((0,T)\times\mathbb{R})}
\\\nonumber&+3\|u^{n}\|^2_{L^2((0,T);H^1(\mathbb{R}))}
+2\|u^{n}\|_{L^\infty((0,T);H^2(\mathbb{R}))}\|u^{n}\|_{L^2((0,T);H^1(\mathbb{R}))}
\\\nonumber&
+6\|u^{n}\|_{L^2((0,T)\times\mathbb{R})}\|u^{n}\|_{L^\infty((0,T)\times\mathbb{R})}
\\\leq & C_{T} (\|u_{0}\|_{H^1(\mathbb{R})}+\|2m_{0}-m_{0}'\|_{\mathcal{M}(\mathbb{R})})^2.
\end{align}

Let $u^{n}$ be the global solution of (\ref{E1}) guaranteed by Theorem \ref{thm} with initial data $u^n_0$.

 {\bf Step 2:} We show that the sequence defined by this procedure converges pointwise a.e. to a function $u\in H_{loc}^{1}(\mathbb{R}^{+};H_{}^{2}(\mathbb{R}))$ that satisfies (\ref{E1}) in the sense of distributions.

For fix any $T>0$, from (\ref{l27})-(\ref{029}), we verify that $\{u^{n}\}_{n\geq1}$ and $\{u_{x}^{n}\}_{n\geq1}$ is uniformly bounded in the space $H^1((0; T) \times \mathbb{R})$.
Therefore we can get a subsequence such that
\begin{align}
u^{n_{k}}\rightharpoonup u ~~\text{weakly~~ in} ~~H^1((0; T) \times \mathbb{R})~~\text{for}~~ n_{k}\rightarrow\infty,\\
u_{x}^{n_{k}}\rightharpoonup u_{x} ~~\text{weakly~~ in} ~~H^1((0; T) \times \mathbb{R})~~\text{for}~~ n_{k}\rightarrow\infty,
\end{align}
and
\begin{align}\label{l2}
u^{n_{k}} \rightarrow ~u ~~a.e.~~ \text{on} ~~(0; T) \times \mathbb{R}~~\text{as}~~ n_{k}\rightarrow\infty,
\end{align}
\begin{align}\label{l02}
u_{x}^{n_{k}} \rightarrow ~u_{x} ~~a.e.~~ \text{on} ~~(0; T) \times \mathbb{R}~~\text{as}~~ n_{k}\rightarrow\infty,
\end{align}
for $u,~u_{x}\in H^1((0; T) \times \mathbb{R})$.
For fixed $t\in(0,T)$, by (\ref{E2}), (\ref{l1})--(\ref{029}) and Young's inequality, we have
\begin{align}
\nonumber\mathbb{V} [\partial_{x}(2-\partial_{x})u^{n_{k}}]
=&\|\partial^2_{x}(2-\partial_{x})u^{n_{k}}\|_{L^{1}((0; T) \times \mathbb{R})}
+\|(2-\partial_{x})u^{n_{k}}_{tx}\|_{L^{1}((0; T) \times \mathbb{R})}
\\\nonumber \leq&\|\partial^2_{x}(2-\partial_{x})u^{n_{k}}\|_{L^{1}((0; T) \times \mathbb{R})}
+\|4u^{n_{k}}[\partial^2_{x}(2-\partial_{x})u^{n_{k}}]\|_{L^1((0; T) \times \mathbb{R})}
\\\nonumber&+\|4\partial_{x}u^{n_{k}}[\partial_{x}(2-\partial_{x})u^{n_{k}}]\|_{L^1((0; T) \times \mathbb{R})}
+\|2\partial^2_{x}u^{n_{k}}_{x}[\partial_{x}(2-\partial_{x})u^{n_{k}}]\|_{L^1((0; T) \times \mathbb{R})}
\\\nonumber&+\|2u^{n_{k}}_{x}[\partial^2_{x}(2-\partial_{x})u^{n_{k}}]\|_{L^1((0; T) \times \mathbb{R})}
+\|12u^{n_{k}}u_{x}^{n_{k}}\|_{L^1((0; T) \times \mathbb{R})}
\\\nonumber&+\|G_{x}\star[6 (u^{n_{k}})^{2}]\|_{L^1((0; T) \times \mathbb{R})}
+\|3(u^{n_{k}}_{x})^2+12(u^{n_{k}})^2\|_{L^1((0; T) \times \mathbb{R})}
\\\nonumber&+\|G\star[3(u^{n_{k}}_{x})^2+12(u^{n_{k}})^2]\|_{L^1((0; T) \times \mathbb{R})}
\\\nonumber
\leq&
\bigg[1+4\|u^{n_{k}}\|_{L^\infty((0; T) \times \mathbb{R})}+2\|u^{n_{k}}_{x}\|_{L^\infty((0; T) \times \mathbb{R})}\bigg]
\|\partial_{x}^2(2-\partial_{x})u^{n_{k}}\|_{L^{1}((0; T) \times \mathbb{R})}
\\\nonumber&+36\|u^{n_{k}}\|^2_{H^1((0; T) \times \mathbb{R})})
+\|2\partial^2_{x}u^{n_{k}}\|_{L^2((0; T) \times \mathbb{R})}
\|\partial_{x}(2-\partial_{x})u^{n_{k}}\|_{L^2((0; T) \times \mathbb{R})}
\\\nonumber&+\|4\partial_{x}u^{n_{k}}\|_{L^2((0; T) \times \mathbb{R})}\|\partial_{x}(2-\partial_{x})u^{n_{k}}\|_{L^2((0; T) \times \mathbb{R})}
\\\nonumber
\leq&\bigg[1+4\|u^{n_{k}}\|_{L^\infty((0; T) \times \mathbb{R})}+2\|u^{n_{k}}\|_{L^\infty((0; T);H^2(\mathbb{R}))}\bigg]
\\\nonumber&\times\bigg[\|(2-\partial_{x})u^{n_{k}}\|_{L^{1}((0; T) \times \mathbb{R})}+(2-\partial_{x})m^{n_{k}}\|_{L^{1}((0; T) \times \mathbb{R})}\bigg]
\\\nonumber&+2\|u^{n_{k}}\|_{L^2((0; T); H^2(\mathbb{R}))}
\|(2-\partial_{x})u^{n_{k}}\|_{L^2((0; T) ;H^1( \mathbb{R}))}
\\\nonumber&+4\|u^{n_{k}}\|_{L^2((0; T) \times \mathbb{R})}\|(2-\partial_{x})u^{n_{k}}\|_{L^2((0; T) ; H^1(\mathbb{R}))}
+36\|u^{n_{k}}\|^2_{H^1((0; T) \times \mathbb{R})}
\\\nonumber \leq&\bigg[1+10\|u_{0}\|_{H^1(\mathbb{R})}+4\|2m_{0}-m_{0}'\|_{\mathcal{M}(\mathbb{R})}\bigg]
\times2T\|2m_{0}-m_{0}'\|_{\mathcal{M}(\mathbb{R})}
\\\nonumber&+44\|u^{n_{k}}\|^2_{H^1((0; T) \times \mathbb{R})}+2\|u^{n_{k}}\|^2_{L^2((0,T);H^2(\mathbb{R}))}
+6\|u^{n_{k}}\|_{H^1((0; T) \times \mathbb{R}))}\|u^{n_{k}}\|_{L^2((0,T);H^2(\mathbb{R}))}
\\\nonumber \leq&C_{T}\bigg[1+10\|u_{0}\|_{H^1(\mathbb{R})}
+5\|2m_{0}-m_{0}'\|_{\mathcal{M}(\mathbb{R})}\bigg]^2,
\end{align}
and
\begin{align}
\|\partial_{x}(2-\partial_{x})u^{n_{k}}(t,\cdot)\|_{L^\infty(\mathbb{R})}
&\leq
\|u_{x}^{n_{k}}(t,\cdot)\|_{L^\infty(\mathbb{R})}+2\|u^{n_{k}}(t,\cdot)\|_{L^\infty(\mathbb{R})}
\\\nonumber &\leq \|u^{n}\|_{H^2(\mathbb{R})}+2\|u^{n}\|_{H^1(\mathbb{R})}
\\\nonumber &\leq5\|u_{0}\|_{H^2(\mathbb{R})}+2\|2m_{0}-m_{0}'\|_{\mathcal{M}}.
\end{align}

 Therefore the sequence $\partial_{x}(2u^{n_{k}}-u_{x}^{n_{k}})\in BV((0,T)\times\mathbb{R}),$ which is the space of functions with bounded variation and $\mathbb{V}(f)$ is the total variation of $f \in BV ((0,T)\times\mathbb{R})$.

 By Helly's theorem, $\partial_{x}(2u^{n_{k}}-u_{x}^{n_{k}})\in BV((0,T)\times\mathbb{R})$ has a subsequence, denoted again $\partial_{x}(2u^{n_{k}}-u_{x}^{n_{k}})(\cdot,\cdot)$, which converges pointwise to some function $v(\cdot,\cdot)$ where
 $\mathbb{V}[v]\leq C_{T}\bigg[1+10\|u_{0}\|_{H^1(\mathbb{R})}+5\|2m_{0}-m_{0}'\|_{\mathcal{M}(\mathbb{R})}\bigg]^2.$

From (\ref{l2}) and (\ref{l02}), we obtain
\begin{align}
\partial^2_{x}u^{n_{k}}(t, \cdot)\rightarrow \partial^2_{x}u(t, \cdot)~~\text{in } ~\mathcal{D}'(\mathbb{R}),~~\text{for~ almost~ all}~ t \in (0,T).
\end{align}

This enables that $v(\cdot,\cdot)=2u_{x}(\cdot, \cdot)-\partial_{xx}u(\cdot, \cdot)$ for a.e. $t \in (0,T).$

Therefore
\begin{align}\label{l3}
u_{xx}^{n_{k}}\rightarrow u_{xx}, ~a.e. ~\text{on}~ (0,T)\times \mathbb{R}~~~\text{as} ~~n_{k}\rightarrow\infty,
\end{align}
and
\begin{align}
\mathbb{V} [\partial_{x}(2-\partial_{x})u]=\|2u_{x}-u_{xx}\|_{\mathcal{M}}\leq
 C_{T} \bigg[1+10\|u_{0}\|_{H^1(\mathbb{R})}+5\|2m_{0}-m_{0}'\|_{\mathcal{M}(\mathbb{R})}\bigg]^2
,
 \end{align}
 for a.e. $t\in (0,T)$.

 Fix $t\in(0,T)$ again, by Theorem \ref{thm}, we obtain that the sequences $\{6(u^{n_{k}})^2+2(u^{n_{k}}_{x})^2\}$ and $(u^{n_{k}}_{x})^2$ are uniformly bounded in $L^{2}(\mathbb{R})$.
Thus, there exists subsequences, denoted again  $\{6(u^{n_{k}})^2+2(u^{n_{k}}_{x})^2\}$ and $(u^{n_{k}}_{x})^2$,
which converge weakly in $L^{2}(\mathbb{R}).$
Thank to (\ref{l2}) and (\ref{l02}), we deduce for a.e. $t\in (0,T)$
\begin{align}
\{6[u^{n_{k}}]^2+2[u^{n_{k}}_{x}]^2\} &\rightharpoonup 6u^2+2u_{x}^2~~~~ \text{weakly~ ~in}~~ L^{2}(\mathbb{R})~ ~\text{for} ~~ n_{k}\rightarrow\infty,
\\
[u^{n_{k}}_{x}]^2 &\rightharpoonup u_{x}^2~~~~ \text{weakly~ ~in}~~ L^{2}(\mathbb{R})~ ~\text{for} ~~ n_{k}\rightarrow\infty.
\end{align}

In view of $G,~G_{x}\in L^{2}(\mathbb{R})$, we obtain
 \begin{align}
G_{x}\ast\{6[u^{n_{k}}]^2+2[u^{n_{k}}_{x}]^2\} & \rightarrow G_{x}\ast(6u^2+2u_{x}^2)~~~~ a.e.~\text{on} ~(0,T)\times\mathbb{R}~ ~\text{as} ~~ n_{k}\rightarrow\infty,
\\
G\ast[u^{n_{k}}_{x}]^2 & \rightarrow G\ast[u_{x}^2]~~~~a.e.~\text{on}~ (0,T)\times\mathbb{R}~ ~\text{as} ~~ n_{k}\rightarrow\infty,
\end{align}

which along with (\ref{l2})-(\ref{l02}) and (\ref{l3}), implies that $u$ satisfies Eq. (\ref{E1}) in $\mathcal{D}'((0,T)\times\mathbb{R})$.

 {\bf Step 3:}
We prove that $u\in C_{w}(\mathbb{R}_{+}; H^{2}(\mathbb{R}))$.
Due to (\ref{029}), we have an uniform bound on $\|u^{n_{k}}(t,\cdot)\|_{H^2(\mathbb{R})}$ for all $t\in \mathbb{R}_{+}$ and all $n_{k}$, which together with (\ref{E2}) leads to
\begin{align}
\nonumber\frac{d}{dt}&\int_{\mathbb{R}}u^{n_{k}}(t,x)\varphi(x)dx\\\nonumber
=&\int_{\mathbb{R}}u_{t}^{n_{k}}(t,x)\varphi(x)dx
\\\nonumber
=&\int_{\mathbb{R}}\bigg(4u^{n_{k}}u^{n_{k}}_x+G\ast\{\partial_x[2(u^{n_{k}}_x)^2+6 (u^{n_{k}})^{2}]+\partial^2_x(u^{n_{k}}_{x})^2\}\bigg)\varphi dx
\\\nonumber
=&\int_{\mathbb{R}}2\partial_{x}(u^{n_{k}})^2\varphi dx
+\int_{\mathbb{R}}G\ast\{\partial_x[2(u^{n_{k}}_x)^2+6 (u^{n_{k}})^{2}]\}\varphi dx+
\int_{\mathbb{R}}G\ast[\partial^2_x(u^{n_{k}}_{x})^2]\varphi dx
\\\nonumber
=&-\int_{\mathbb{R}}2(u^{n_{k}})^2\varphi_{x} dx
-\int_{\mathbb{R}}G\ast[2(u^{n_{k}}_x)^2+6 (u^{n_{k}})^{2}]\varphi_{x} dx+
\int_{\mathbb{R}}G\ast[(u^{n_{k}}_{x})^2]\varphi_{xx} dx
\\\nonumber
\leq&
2\|(u^{n_{k}})^2\|_{H^1}\|\varphi_{x}\|_{H^{-1}}+\|G\star[2(u^{n_{k}}_x)^2+6 (u^{n_{k}})^{2}]\|_{H^1}\|\varphi_{x}\|_{H^{-1}}
+\|G\ast[(u^{n_{k}}_{x})^2]\|_{L^2}\|\varphi_{xx}\|_{L^2}
\\\nonumber\leq&
4\|u^{n_{k}}\|_{H^1}\|u^{n_{k}}\|_{L^\infty}\|\varphi_{x}\|_{H^{-1}}
+\|G\ast[2(u^{n_{k}}_x)^2+6 (u^{n_{k}})^{2}]\|_{L^{2}}\|\varphi_{x}\|_{H^{-1}}
\\\nonumber&+\|G_{x}\ast[2(u^{n_{k}}_x)^2+6 (u^{n_{k}})^{2}]\|_{L^{2}}\|\varphi_{x}\|_{H^{-1}}
+\|G\ast[(u^{n_{k}}_{x})^2]\|_{L^2}\|\varphi_{xx}\|_{L^2}
\\\nonumber\leq&
4\|u^{n_{k}}\|^2_{H^1}\|\varphi_{x}\|_{H^{-1}}
+\|G\|_{L^{2}}\|2(u^{n_{k}}_x)^2+6 (u^{n_{k}})^{2}\|_{L^{1}}\|\varphi_{x}\|_{H^{-1}}
\\\nonumber&+\|G_{x}\|_{L^{2}}\|2(u^{n_{k}}_x)^2+6 (u^{n_{k}})^{2}\|_{L^{1}}\|\varphi_{x}\|_{H^{-1}}
+\|G\star[(u^{n_{k}}_{x})^2]\|_{L^2}\|\varphi_{xx}\|_{L^2}
\\\nonumber\leq&(12\|\varphi_{x}\|_{H^{-1}}
+\|\varphi_{xx}\|_{L^2})\|u_{0}\|^2_{H^1(\mathbb{R})}
\\\nonumber\leq& C\|u_{0}\|^2_{H^1(\mathbb{R})},
\end{align}
where $\varphi(x)\in H^{-2}$.
Hence the family $t\mapsto u^{n_{k}}(t, \cdot)\in H^{2}(\mathbb{R})$ is weakly equicontinuous on $[0,T]$ for any $T>0$. Making use of the Arzela-Ascoli theorem, implies that $\{u^{n_{k}}\}$ contains a subsequence, denoted again by $\{u^{n_{k}}\}$, converging weakly in $H^2(\mathbb{R})$, uniformly in $t$. The previous discussion concludes the fact that the limit function $u$ is weakly continuous from $\mathbb{R}_{+}$ into $H^2(\mathbb{R}).$

Because of the weakly convergence in $H^2(\mathbb{R})$ for a.e. $t\in \mathbb{R}_{+}$, we have
\begin{align}
\nonumber\|u(t,\cdot)\|_{H^2(\mathbb{R})}&\leq \liminf_{n_{k}\rightarrow\infty}\|u^{n_{k}}(t,\cdot)\|_{H^2(\mathbb{R})} =\liminf_{n_{k}\rightarrow\infty}[3\|u_{0}^{n}\|_{H^1(\mathbb{R})}+2\|(2-\partial_{x})m_{0}^{n}\|_{H^1(\mathbb{R})}]
\\\nonumber&\leq 3\|u_{0}\|_{H^2(\mathbb{R})}+2\|2m_{0}-m_{0}'\|_{\mathcal{M}},
\end{align}

for a.e. $t\in \mathbb{R}_{+}$. Therefore $u\in L^{\infty}(\mathbb{R}_{+}\times\mathbb{R})$, which along with Theorem \ref{thm}, we get
\begin{align}
\nonumber\|u^{n}_{x}(t,\cdot)\|_{L^\infty(\mathbb{R})}&\leq \|u^{n}(t,\cdot)\|_{H^2(\mathbb{R})}
\\\nonumber&\leq3\|u_{0}\|_{H^2(\mathbb{R})}+2\|2m_{0}-m_{0}'\|_{\mathcal{M}},
\\\nonumber
\|u^{n}_{xx}(t,\cdot)\|_{L^\infty(\mathbb{R})}&\leq3\|u^{n}_{x}(t,\cdot)\|_{L^\infty(\mathbb{R})}
+2\|u^{n}(t,\cdot)\|_{L^\infty(\mathbb{R})}
\\\nonumber&\leq3\|u^{n}_{x}(t,\cdot)\|_{L^\infty(\mathbb{R})}+2\|u^{n}\|_{H^1(\mathbb{R})},
\\\nonumber&\leq6\|2m_{0}-m_{0}'\|_{\mathcal{M}}+11\|u_{0}\|_{H^1(\mathbb{R})},~~n\geq1,
\end{align}
which along with (\ref{l3}), leads to  $u_{x}\in L^{\infty}(\mathbb{R}_{+}\times\mathbb{R})$ and $u_{xx}\in L^{\infty}(\mathbb{R}_{+}\times\mathbb{R}).$

 {\bf Step 4:}
We conclude that $u\in C(\mathbb{R}_{+}; H^{1}(\mathbb{R}))$.
In order to prove $u\in C(\mathbb{R}_{+}; H^{1}(\mathbb{R}))$, we need to prove that the functional $E(u(t))= \|u(t,\cdot)\|^2_{H^1(\mathbb{R})}$ is conserved in time.

Indeed, if this holds, then
\begin{align}
\|u(t)-u(s)\|^{2}_{H^{1}(\mathbb{R})}&=\|u(t)\|^2_{H^{1}(\mathbb{R})}
+\|u(s)\|^2_{H^{1}(\mathbb{R})}-2\bigg(u(t),u(s)\bigg)_{H^{1}}
\nonumber\\\nonumber&=2\|u_{0}\|^2_{H^{1}(\mathbb{R})}-2\bigg(u(t),u(s)\bigg)_{H^{1}}, ~~t,s ~\in \mathbb{R}_{+}.
\end{align}

The scalar product $(u(t),u(s))_{H^{1}}$ converges to $\|u(t)\|^2_{H^{1}(\mathbb{R})}=\|u_{0}\|^2_{H^{1}(\mathbb{R})},$ as $s\rightarrow t$.

Thus, we get $\|u(t)-u(s)\|_{H^{1}(\mathbb{R})}\rightarrow 0$,  as $s\rightarrow t$. That is $u\in C(\mathbb{R}_{+}; H^{1}(\mathbb{R}))$.

We prove the conservation of $E(u)$ in time by a regularization technique.

Since $u$ satisfies (\ref{E2}) in distributional sense, it follows that for a.e. $t \in \mathbb{R}_{+}$
\begin{align}\label{l5}
\rho_{n}\ast u_t-4\rho_{n}\ast(uu_x)-\rho_{n}\ast [G_{x}\ast(2u_x^2+6 u^{2})+\partial_{x}^2G\ast(u^2_{x})]=0, ~~ n>1.
\end{align}

Taking an $L^2$ energy estimate with $\rho_{n}\ast u$ yields for a.e. $t \in \mathbb{R}_{+}$
\begin{align}\label{l6}
\frac{1}{2}\frac{d}{dt}\int_{\mathbb{R}}(\rho_{n}\ast u)^2dx&-4\int_{\mathbb{R}}(\rho_{n}\ast u) [\rho_{n}\ast(uu_x)]dx
\nonumber\\ &-\int_{\mathbb{R}}(\rho_{n}\ast u)\bigg\{\rho_{n}\ast [G_{x}\ast(2u_x^2+6 u^{2})
+\partial^2_{x}G\ast(u^2_{x})]\bigg\}dx=0.
\end{align}

Differentiating (\ref{l5}) with respect to $x$, then taking $L^2$ inner production with $\rho_{n}\ast u_{x}$, we obtain
\begin{align}
\frac{1}{2}\frac{d}{dt}\int_{\mathbb{R}}(\rho_{n}\ast u_{x})^2dx&-4\int_{\mathbb{R}}(\rho_{n}\ast u_{x}) [\rho_{n,x}\ast(uu_x)]dx
\nonumber\\ \nonumber&-\int_{\mathbb{R}}(\rho_{n}\ast u_{x})[\rho_{n}\ast\partial^2_{x}G\ast(2u_x^2+6 u^{2})+\rho_{n,x}\ast\partial^2_{x}G\ast(u^2_{x})]dx=0.
\end{align}

Since $\partial_{x}^2(G\ast f)=G\ast f-f$ for $f\in L^2(\mathbb{R})$, it follows from $2u_x^2+6 u^{2}\in L^2(\mathbb{R})~and~u^2_{x}\in L^2(\mathbb{R})$ that
\begin{align}\label{l7}
\frac{1}{2}\frac{d}{dt}\int_{\mathbb{R}}(\rho_{n}\ast u_{x})^2dx&-4\int_{\mathbb{R}}(\rho_{n}\ast u_{x}) [\rho_{n,x}\ast(uu_x)]dx
\nonumber\\ \nonumber&-\int_{\mathbb{R}}(\rho_{n}\ast u_{x})[\rho_{n}\ast G\ast(2u_x^2+6 u^{2})]dx-\int_{\mathbb{R}}(\rho_{n}\ast u_{x})[\rho_{n}\ast G_x\ast(u^2_{x})]dx
\\&+\int_{\mathbb{R}}(\rho_{n}\ast u_{x})[\rho_{n}\ast(2u_x^2+6 u^{2})+\rho_{n,x}\ast(u_x^2)]dx=0.
\end{align}

Combining (\ref{l6}) and (\ref{l7}) yields
\begin{align}
\nonumber\frac{1}{2}\frac{d}{dt}\int_{\mathbb{R}}[(\rho_{n}\ast u)^2+(\rho_{n}\ast u_{x})^2]dx
=&-2\int_{\mathbb{R}}(\rho_{n}\ast u_{x})[\rho_{n}\ast(u_x^2)]dx-8\int_{\mathbb{R}}(\rho_{n}\ast u_{x})[\rho_{n}\ast( u^{2})]dx\\&-\int_{\mathbb{R}}(\rho_{n}\ast u_{x})[\rho_{n,x}\ast(u_x^2)]dx+4\int_{\mathbb{R}}(\rho_{n}\ast u_{x})[\rho_{n,x}\ast(uu_x)]dx.
\end{align}

Note that
\begin{align}
\lim_{n\rightarrow{\infty}}\|\rho_{n}\ast u_{x}-u_{x}\|_{L^2(\mathbb{R})}&=\lim_{n\rightarrow{\infty}}\|\rho_{n}\ast u^2_{x}-u^2_{x}\|_{L^2(\mathbb{R})}\nonumber=\lim_{n\rightarrow{\infty}}\|\rho_{n}\ast u^2-u^2\|_{L^2(\mathbb{R})}
\\ \nonumber
&=\lim_{n\rightarrow{\infty}}\|\rho_{n}\ast u_{xx}-u_{xx}\|_{L^2(\mathbb{R})}
=\lim_{n\rightarrow{\infty}}\|\rho_{n}\ast u-u\|_{L^2(\mathbb{R})}
=0.
\end{align}

Therefore
\begin{align}
&\int_{\mathbb{R}}(\rho_{n}\ast u_{x})[\rho_{n}\ast( u^{2})]dx\rightarrow 0~~~\text{as} ~n\rightarrow\infty,
\\
&\int_{\mathbb{R}}(\rho_{n}\ast u_{x})[\rho_{n}\ast(u_x^2)]dx\rightarrow \int_{\mathbb{R}}u_x^3dx ~~~\text{as} ~n\rightarrow\infty,
\\&
\int_{\mathbb{R}}[\rho_{n,x}\ast (u^2_{x})](\rho_{n}\ast u_{x})dx=-\int_{\mathbb{R}}(\rho_{n,xx}\ast u)
(\rho_{n}\ast u^2_{x})dx\rightarrow \int_{\mathbb{R}}u_x^2u_{xx}dx=0 ~~~\text{as} ~n\rightarrow\infty,
\end{align}
for $u(t,\cdot)\in H^2(\mathbb{R})$ for a.e. $t\in\mathbb{R}_{+}$.

Observe that
\begin{align}
\nonumber\int_{\mathbb{R}}[\rho_{n,x}\ast (u u_{x})]&(\rho_{n}\ast u_{x})dx=-\int_{\mathbb{R}}(\rho_{n,xx}\ast u )(\rho_{n}\ast u_{x})(\rho_{n}\ast u)dx\\&+\int_{\mathbb{R}}[\rho_{n,xx}\ast u][(\rho_{n}\ast u)(\rho_{n}\ast u_{x})
-\rho_{n}\ast(uu_{x})]dx.
\end{align}

In view of Lemma \ref{lem9} with $u\in W^{1,\infty}(\mathbb{R})$ and
$u(t,)\in H^1({\mathbb{R}})\subset L^{2}(\mathbb{R})$, we deduce
\begin{align}
\|[\rho_{n}&\ast(u u_{x})-(\rho_{n}\ast u_{x})(\rho_{n}\ast u)]\|_{L^2(\mathbb{R})}
\\\nonumber \leq&\|\rho_{n}\ast(u u_{x})-u(\rho_{n}\ast u_{x})\|_{L^2(\mathbb{R})}
\\\nonumber&+\|u\rho_{n}\ast(u_{x})-(\rho_{n}\ast u_{x})(\rho_{n}\ast u)\|_{L^2(\mathbb{R})}
\\\nonumber \leq&\|\rho_{n}\ast(u u_{x})-u(\rho_{n}\ast u_{x})\|_{L^2(\mathbb{R})}
\\\nonumber&+\|\rho_{n}\ast u_{x}\|_{L^\infty(\mathbb{R})}\|u-\rho_{n}\ast u\|_{L^2(\mathbb{R})}
\rightarrow 0,~~ ~as~ ~n \rightarrow \infty,
\end{align}

which along with the fact that
\begin{align}
\nonumber\|\rho_{n,xx}\ast u\|_{L^2(\mathbb{R})}\leq& \|u_{xx}\|_{L^2(\mathbb{R})}
 \leq \|u\|_{H^2(\mathbb{R})}
\\ \nonumber  \leq&\|u_{0}\|_{H^1(\mathbb{R})}+2\|2m_{0}-m_{0}'\|_{\mathcal{M}},
\end{align}
implies
\begin{align}
\int_{\mathbb{R}}[\rho_{n,xx}\ast u][(\rho_{n}\ast u)(\rho_{n}\ast u_{x})
-\rho_{n}\ast(uu_{x})]dx \rightarrow 0 ~~~ ~as~ ~n \rightarrow \infty.
\end{align}

On the other hand, an integration by parts leads to
$$
\int_{\mathbb{R}}[\rho_{n,xx}\ast u](\rho_{n}\ast u_{x})(\rho_{n}\ast u)dx=-\frac{1}{2}\int_{\mathbb{R}}(\rho_{n}\ast u_{x})^3 dx \rightarrow -\frac{1}{2}\int_{\mathbb{R}}( u_{x})^3 dx  ~~~~as~ ~n \rightarrow \infty.
$$

Now we define
\begin{align}
&\nonumber E_{n}(t):=\int_{\mathbb{R}}[(\rho_{n}\ast u)^2+(\rho_{n}\ast u_{x})^2]dx,~~~t\in \mathbb{R}_{+},~~n\leq~1.
\\\nonumber&
G_{n}=-2\int_{\mathbb{R}}(\rho_{n}\ast u_{x})[\rho_{n}\ast(u_x^2)]dx-8\int_{\mathbb{R}}(\rho_{n}\ast u_{x})[\rho_{n}\ast( u^{2})]dx\\\nonumber&-\int_{\mathbb{R}}(\rho_{n}\ast u_{x})[\rho_{n,x}\ast(u_x^2)]dx+4\int_{\mathbb{R}}(\rho_{n}\ast u_{x})[\rho_{n,x}\ast(uu_x)]dx, ~~t\in \mathbb{R}_{+},~~n\geq1.
\end{align}

The previous discussion enables us to get
\begin{align}\label{l8}
\frac{d}{dt}E_{n}(t)=G_{n}(t),~~~~~~~n\geq1,
\end{align}
and
\begin{align}
G_{n}(t)~\rightarrow 0,~ ~\text{uniformly~in}~~t,~~~\text{as}~ ~n \rightarrow \infty,
\end{align}
for a.e. $t\in\mathbb{R}_{+}$.

Taking advantage of Lemma \ref{lem1} and (\ref{l8}), we get
\begin{align}
E_{n}(t)-E_{n}(0)=\int_{0}^{t}G_{n}(s)ds,~~t\in\mathbb{R}_{+},~~n\geq1.
\end{align}

Making full use of H\"{o}lder's inequality and Young's inequality, we obtain the existence of a constant $K >0$ which only depends on $\|u_{0}\|_{H^1(\mathbb{R})}$ and $\|2m_{0}-m_{0}'\|_{\mathcal{M}}$
such that
\begin{align}\label{l9}
|G_{n}(t)|\leq K; ~~~~t \in \mathbb{R}_{+};~~~ n \leq 1,
\end{align}

for $u,u_{x}\in L^{\infty}(\mathbb{R}_{+}\times\mathbb{R}).$

Combining with (\ref{l8})-(\ref{l9}), by Lebesgue's dominated convergence theorem, we term to
$$\lim_{n\rightarrow 0}[E_{n}(t)-E_{n}(0)]=0,~~~t \in \mathbb{R}_{+}.$$

Therefore, by (\ref{l10}), we have
$$E(u(t))=\lim_{n\rightarrow 0}E_{n}(t)=E(u_{0}),$$
for fix $t \in \mathbb{R}_{+}$,

This implies that $ u \in C(\mathbb{R}_{+};H^1(\mathbb{R}))$ and $E$ is conserved along our solution.

In view of (\ref{E1}) and H\"{o}lder's inequality and Young's inequality, we have $ u \in C^1(\mathbb{R}_{+};L^2(\mathbb{R}))$.

 {\bf Step 5:}
Finally, we prove the uniqueness of weak solutions to (\ref{E2}).
Let $u~ and~ v $ be two weak solutions of (\ref{E2}) within the class $\{f~\in ~ C_{w}(\mathbb{R}_{+};H^2(\mathbb{R}))\cap C(\mathbb{R}_{+};H^1(\mathbb{R}))\cap C^1(\mathbb{R}_{+};L^2(\mathbb{R}))$ with the total variation of $f-f_{xx}\in\mathcal{M}(\mathbb{R})$ uniformly bounded on
$\mathbb{R}_{+}$\}.

Setting
$M:=\sup_{t\geq0} \{\|u(t,\cdot)-u_{xx}(t,\cdot)\|_{\mathcal{M}}+\|v(t,\cdot)-v_{xx}(t,\cdot)\|_{\mathcal{M}}\}$, we have
\begin{align}\label{l13}
\nonumber|u(t,x)|&=|G\ast [u-u_{xx}](x)|
\\ &\leq \|G\|_{L^{\infty}} \|[u-u_{xx}](t,\cdot)\|_{\mathcal{M}}\leq\frac{1}{2}M,
\\|u_{x}(t,x)|&=|G_{x}\ast [u-u_{xx}](x)|\leq\frac{1}{2}M,
\\|u_{xx}(t,x)|&=|(2u-u_{x})(t,x)|+2|u_{x}(t,x)|\leq\frac{5}{2}M,
\end{align}
for all $(t,x)\in \mathbb{R}_{+}\times  \mathbb{R}$.
Similarly,
\begin{align}
|v(t,x)|\leq\frac{1}{2}M,~~|v_{x}(t,x)|\leq\frac{1}{2}M,~~~(t,x)\in \mathbb{R}_{+}\times  \mathbb{R}.
\end{align}
Applying the fact $u-u_{xx},~v-v_{xx}\in\mathcal{M}_{+}(\mathbb{R})$ and
\begin{align}
\nonumber\|G\ast (u-u_{xx})\|_{L^1}=&\sup_{\varphi \in L^{\infty}(\mathbb{R}),\|\varphi\|_{L^{\infty}}\leq1}
\int_{\mathbb{R}}\varphi(y)[G\ast (u-u_{xx})](y) dy
\\\nonumber&=\sup_{\varphi\in L^{\infty}(\mathbb{R}),\|\varphi\|_{L^{\infty}}\leq1}
\int_{\mathbb{R}}\varphi(y)\int_{\mathbb{R}}G(y-x) d(u-u_{xx})(x) dy
\\\nonumber&=\sup_{\varphi\in L^{\infty}(\mathbb{R}),\|\varphi\|_{L^{\infty}}\leq1}
\int_{\mathbb{R}}(G\ast \varphi) d(u-u_{xx})(x)
\\\nonumber&=\sup_{\varphi\in L^{\infty}(\mathbb{R}),\|\varphi\|_{L^{\infty}}\leq1}
\|G\ast \varphi\|_{L^{\infty}(\mathbb{R})} \|u-u_{xx}\|_{\mathcal{M}}
\\\nonumber&\leq\sup_{\varphi \in L^{\infty}(\mathbb{R}),\|\varphi\|_{L^{\infty}}\leq1}
\|G\|_{L^1(\mathbb{R})}\|\varphi\|_{L^{\infty}(\mathbb{R})} \|u-u_{xx}\|_{\mathcal{M}}
\\\nonumber&=\|u-u_{xx}\|_{\mathcal{M}}.
\end{align}

Thus, we obtain for $t\geq1$
\begin{align}
\nonumber&\|u(t,\cdot)\|_{L^1}=\|G\ast [u(t,\cdot)-u_{xx}(t,\cdot)]\|_{L^1}\leq M.
\end{align}
Similarly, we infer for $t\geq1$
\begin{align}
&\|u_{x}(t,\cdot)\|_{L^1}=\|G_{x}\ast [u(t,\cdot)-u_{xx}(t,\cdot)]\|_{L^1}\leq M,
\\&\|v(t,\cdot)\|_{L^1}=\|G\ast [v(t,\cdot)-v_{xx}(t,\cdot)]\|_{L^1}\leq M,
\end{align}
\begin{align}\label{l14}
\|v_{x}(t,\cdot)\|_{L^1}=\|G_{x}\ast [v(t,\cdot)-v_{xx}(t,\cdot)]\|_{L^1}\leq M.
\end{align}
Let us define
$$w(t,x):=u(t,x)-v(t,x),~~~(t,x)\in \mathbb{R}_{+}\times \mathbb{R}.$$
Thus we obtain
\begin{align}\label{E4}
\left\{
\begin{array}{ll}
w_t(t,x)=&4vw_x+4u_{x}w-(u_{x}+v_{x})w_{x}+2G_{x}\ast[(u_{x}+v_{x})w_{x}]
\\&+6G_{x}\ast[(u+v)w]+G\ast[(u_{x}+v_{x})w_{x}], ~~ t>0,\\[1ex]
w(0,x)=&u_{0}(x)-v_{0}(x)=w_{0}(x).
\end{array}
\right.
\end{align}

We claim that  $\eta :\mathbb{R}_{+}\rightarrow \mathbb{R}$ is a decreasing
$C^2$ function such that $\eta(s) = 1$ for $s\in[0,\frac{1}{2}]$;  $\eta(s) = e^{-s}$ for $s\in[1,\infty]$
 and $\eta$ is a polynomial on $[\frac{1}{2},1].$

For $R > 0$, we put
$$\eta_{R}(x) = \eta(\frac{|x|}{R}),~~x\in\mathbb{R}.$$

Thanks to Lemma \ref{lem1}, we have for all $t\geq0$ and $n\geq1,$
\begin{align}
\int_{\mathbb{R}}|\rho_{n}\ast w_{x}|(t,x)\eta_{R}(x)dx&-\int_{\mathbb{R}}|\rho_{n}\ast w_{x}|(0,x)\eta_{R}(x)dx\nonumber\\&=\int_{0}^{t}\int_{\mathbb{R}}\frac{\partial|\rho_{n}\ast w_{x}|(s,x)}{\partial_{s}}\eta_{R}(x)dxds.
\end{align}

Notice that
$$ \partial_{t}|\rho_{n}\ast w_{x}|=(\rho_{n}\ast w_{tx})sgn(\rho_{n}\ast w_{x}).$$
Then $t\mapsto \rho_{n}\ast w_{tx}=\rho_{n,x}\ast w$ is uniformly bounded in $L^1(\mathbb{R})$, since
\begin{align}
\nonumber\|\rho_{n}&\ast w_{tx}\|_{L^1(\mathbb{R})}
\\ \nonumber\leq&\|\rho_{n}\ast \partial_{x} \{4vw_x+4u_{x}w-(u_{x}+v_{x})w_{x}
\\\nonumber&+G_{x}\ast[2(u_{x}+v_{x})w_{x}+6(u+v)w]
+G\ast[(u_{x}+v_{x})w_{x}]\}\|_{L^1(\mathbb{R})}
\\\nonumber\leq&
\|\rho_{n}\ast[(2v_{x}-4v)(w-w_{xx})]\|_{L^1(\mathbb{R})}
+\|[-2u_{xx}+2(u_{x}+v_{x})]w_{x}\|_{L^1(\mathbb{R})}
\\\nonumber&+\|[4u_{xx}-2v_{x}-2v-6u]w\|_{L^1(\mathbb{R})}
+\|2G\ast[(u_{x}+v_{x})w_{x}]\|_{L^1(\mathbb{R})}
\\\nonumber&+\|6G\ast[(u+v)w]\|_{L^1(\mathbb{R})}+
\|G_{x}\ast[(u_{x}+v_{x})w_{x}]\|_{L^1(\mathbb{R})}
\\ \nonumber
\leq&
\|2v_{x}-4v\|_{L^\infty(\mathbb{R})}\|w-w_{xx}\|_{\mathcal{M}(\mathbb{R})}
+\|2(u_{x}+v_{x})\|_{L^\infty(\mathbb{R})}\|w_{x}\|_{L^1(\mathbb{R})}
\\\nonumber&+\|[-2u_{xx}+2(u_{x}+v_{x})]\|_{L^\infty(\mathbb{R})}\|w_{x}\|_{L^1(\mathbb{R})}
+\|w\|_{L^1(\mathbb{R})}\|[4u_{xx}-2v_{x}-2v-6u]\|_{L^\infty(\mathbb{R})}
\\\nonumber&+\|6(u+v)\|_{L^\infty(\mathbb{R})}\|w\|_{L^1(\mathbb{R})}+
\|u_{x}+v_{x}\|_{L^\infty(\mathbb{R})}\|w_{x}\|_{L^1(\mathbb{R})}
\\ \leq&34M^2.
\end{align}

Setting $R\rightarrow\infty,$ applying Lebesgue's dominated convergence theorem, yields that for
all $t\in\mathbb{R}_{+},$

$$\int_{\mathbb{R}}|\rho_{n}\ast w_{x}|(t,x)dx-\int_{\mathbb{R}}|\rho_{n}\ast w_{x}|(0,x)dx=\int_{0}^{t}\int_{\mathbb{R}}\partial_{s}|\rho_{n}\ast w_{x}|(s,x)dxds.
$$
Differentiating the above relation with respect to time, we propose that
\begin{align}\label{l32}
\frac{d}{dt}\int_{\mathbb{R}}|\rho_{n}\ast w_{x}|dx
=\int_{\mathbb{R}}(\rho_{n}\ast w_{tx})sgn(\rho_{n}\ast w_{x})dx.
\end{align}

Similarly, we infer that
\begin{align}\label{l31}
\frac{d}{dt}\int_{\mathbb{R}}|\rho_{n}\ast w|dx
=\int_{\mathbb{R}}(\rho_{n}\ast w_{t})sgn(\rho_{n}\ast w)dx,
\end{align}

Therefore, convoluting $\rho_{n} $ with $w$, by (\ref{E4}) and (\ref{l31}), we get
\begin{align}\label{l12}
\frac{d}{dt}&\int_{\mathbb{R}}|\rho_{n}\ast w|dx
\nonumber \\ \nonumber=&\int_{\mathbb{R}}(\rho_{n}\ast w_{t})sgn(\rho_{n}\ast w)dx
\\ \nonumber=&\int_{\mathbb{R}}[\rho_{n}\ast (4vw_x)]sgn(\rho_{n}\ast w)dx+\int_{\mathbb{R}}[\rho_{n}\ast (4u_{x}w)]sgn(\rho_{n}\ast w)dx
\\\nonumber&-\int_{\mathbb{R}}\bigg\{\rho_{n}\ast [(u_{x}+v_{x})w_{x}]\bigg\}sgn(\rho_{n}\ast w)dx+\int_{\mathbb{R}}\bigg\{\rho_{n}\ast G\ast[(u_{x}+v_{x})w_{x}]\bigg\}sgn(\rho_{n}\ast w)dx
\\\nonumber&+\int_{\mathbb{R}}\bigg\{\rho_{n}\ast G_{x}\ast[2(u_{x}+v_{x})w_{x}+6(u+v)w]\bigg\}sgn(\rho_{n}\ast w)dx
\\ \nonumber\leq&\int_{\mathbb{R}}|\rho_{n}\ast (4vw_x)|dx+\int_{\mathbb{R}}|\rho_{n}\ast (4u_{x}w)|dx
\\\nonumber&+\int_{\mathbb{R}}|\rho_{n}\ast [(u_{x}+v_{x})w_{x}]|dx
+\int_{\mathbb{R}}|\rho_{n}\ast G\ast[(u_{x}+v_{x})w_{x}]|dx
\\&+\int_{\mathbb{R}}|\rho_{n}\ast G_{x}\ast[2(u_{x}+v_{x})w_{x}+6(u+v)w]|dx.
\end{align}

We estimate the terms on the right-hand side of (\ref{l12}), respectively.
Due to (\ref{l13})-(\ref{l14}), it follows that
\begin{align}\label{l18}
\int_{\mathbb{R}}|\rho_{n}\ast (4vw_x)|dx
\nonumber\leq&4\int_{\mathbb{R}}(\rho_{n}\ast |vw_x|)dx
\\\nonumber\leq&4\|v\|_{L^{\infty}}\int_{\mathbb{R}}(\rho_{n}\ast |w_x|)dx
\\\leq&2M\int_{\mathbb{R}}|\rho_{n}\ast w_x|dx
+2M\int_{\mathbb{R}}\bigg(\rho_{n}\ast |w_{x}|-|\rho_{n}\ast w_x|\bigg)dx,
\\
\nonumber
\int_{\mathbb{R}}|\rho_{n}\ast (4u_{x}w)|dx
\leq&
\int_{\mathbb{R}}\rho_{n}\ast |4u_{x}w|dx
\\ \nonumber
\leq&4\|u_{x}\|_{L^{\infty}}\int_{\mathbb{R}}\rho_{n}\ast |w|dx
\\ \leq&2M\int_{\mathbb{R}}|\rho_{n}\ast w|dx
+2M\int_{\mathbb{R}}\bigg(\rho_{n}\ast |w|-|\rho_{n}\ast w|\bigg)dx,
\\\nonumber
\int_{\mathbb{R}}|\rho_{n}\ast [(u_{x}+v_{x})w_{x}]|dx
\leq&\int_{\mathbb{R}}\rho_{n}\ast |(u_{x}+v_{x})w_{x}|dx
\\\nonumber\leq&\|u_{x}+v_{x}\|_{L^{\infty}}\int_{\mathbb{R}}\rho_{n}\ast |w_{x}|dx
\\ \leq &M\int_{\mathbb{R}}|\rho_{n}\ast w_{x}|dx
+M\int_{\mathbb{R}}\bigg(\rho_{n}\ast |w_{x}|-|\rho_{n}\ast w_x|\bigg)dx,
\\\nonumber
\int_{\mathbb{R}}|\rho_{n}\ast G\ast[(u_{x}+v_{x})w_{x}]|dx
\leq&\int_{\mathbb{R}}\bigg|\int_{\mathbb{R}}\int_{\mathbb{R}}
\rho_{n}(y)G(x-z-y)[(u_{x}+v_{x})w_{x}](z)|dydz\bigg|dx.
\end{align}
Setting $w=z+y$, we get
\begin{align}\label{l11}
\nonumber\int_{\mathbb{R}}\bigg|\int_{\mathbb{R}}&
\int_{\mathbb{R}}\rho_{n}(y)G(x-z-y)[(u_{x}+v_{x})w_{x}](z)dydz\bigg|dx
\\\nonumber=&\int_{\mathbb{R}}\bigg|\int_{\mathbb{R}}\int_{\mathbb{R}}
\rho_{n}(w-z)G(x-w)[(u_{x}+v_{x})w_{x}](z)
dwdz\bigg|dx
\\\nonumber=&\int_{\mathbb{R}}\bigg|\int_{\mathbb{R}}\rho_{n}\ast [(u_{x}+v_{x})w_{x}](w)G(x-w)dy\bigg|dx
\\\nonumber=&\int_{\mathbb{R}}|G\ast\{\rho_{n}\ast [(u_{x}+v_{x})w_{x}]\}|dx
\\\nonumber\leq&\|G\|_{L^1}\int_{\mathbb{R}}|\rho_{n}\ast [(u_{x}+v_{x})w_{x}]|dx
\\\nonumber\leq&\|u_{x}+v_{x}\|_{L^\infty}\int_{\mathbb{R}}\rho_{n}\ast |w_{x}|dx
\\\leq&M\int_{\mathbb{R}}|\rho_{n}\ast w_{x}|dx
+M\int_{\mathbb{R}}\bigg(\rho_{n}\ast |w_{x}|-|\rho_{n}\ast w_{x}|\bigg)dx.
\end{align}

By the same token with the last inequality, we obtain
\begin{align}\label{l19}
\nonumber\int_{\mathbb{R}}&|\rho_{n}\ast G_{x}\ast[2(u_{x}+v_{x})w_{x}+6(u+v)w]|dx
\\\nonumber\leq&\|G_{x}\|_{L^1}\int_{\mathbb{R}}|\rho_{n}\ast [2(u_{x}+v_{x})w_{x}+6(u+v)w]|dx
\\\nonumber\leq&2\|u_{x}+v_{x}\|_{L^\infty}\int_{\mathbb{R}}\rho_{n}\ast |w_{x}|dx
+6\|u+v\|_{L^\infty}\int_{\mathbb{R}}\rho_{n}\ast |w|dx
\\\nonumber\leq&2M\int_{\mathbb{R}}|\rho_{n}\ast w_{x}|dx+6M\int_{\mathbb{R}}|\rho_{n}\ast w|dx
\\&+2M\int_{\mathbb{R}}\bigg(\rho_{n}\ast |w_{x}|-|\rho_{n}\ast w_{x}|\bigg)dx
+6M\int_{\mathbb{R}}\bigg(\rho_{n}\ast |w|-|\rho_{n}\ast w|\bigg)dx.
\end{align}

Thus, combining (\ref{l18})-(\ref{l19}) with (\ref{l12}), we have
\begin{align}\label{l23}
\frac{d}{dt}&\int_{\mathbb{R}}|\rho_{n}\ast w|dx\leq 6M\int_{\mathbb{R}}|\rho_{n}\ast w_x|dx+8M\int_{\mathbb{R}}|\rho_{n}\ast w|dx+Q_{n}(t).
\end{align}

Here $Q_{n}(t)$ satisfies the follow two conditions:
\begin{align}\label{l21}
\left\{
\begin{array}{ll}
&(1)~~~~Q_{n}(t)\rightarrow0~~\text{uniformly~in}~t,~~\text{as}~~ t\rightarrow\infty,\\[1ex]
&(2)~~~~|Q_{n}(t)|\leq H,~~ n\geq1,~~t\in\mathbb{R}_{+},
\end{array}
\right.
\end{align}
where $H$ only depends on $M$.

Then, convoluting $\rho_{n,x} $ with $w$, by (\ref{l32}) and differentiating (\ref{E4}) with respect to $x$, we obtain

\begin{align}\label{l17}
\frac{d}{dt}\int_{\mathbb{R}}|\rho_{n}\ast w_{x}|dx
 \nonumber=&\int_{\mathbb{R}}(\rho_{n}\ast w_{tx})sgn(\rho_{n,x}\ast w)dx
\\ \nonumber=&\int_{\mathbb{R}}\{\rho_{n}\ast [w_x(4u_{x}+4v_{x}-u_{xx}-v_{xx})]\}sgn(\rho_{n,x}\ast w)dx
\\ \nonumber&+\int_{\mathbb{R}}\{\rho_{n}\ast [(4v-u_{x}-v_{x})w_{xx})]\}sgn(\rho_{n,x}\ast w)dx
\\\nonumber&+\int_{\mathbb{R}}[\rho_{n}\ast (4u_{xx}w)]sgn(\rho_{n,x}\ast w)dx
\\ \nonumber&+\int_{\mathbb{R}}(\rho_{n}\ast G_{x}\ast[(u_{x}+v_{x})w_{x}])sgn(\rho_{n,x}\ast w)dx
\\\nonumber&+\int_{\mathbb{R}}(\rho_{n}\ast G_{xx}\ast[2(u_{x}+v_{x})w_{x}+6(u+v)w])sgn(\rho_{n,x}\ast w)dx
\\=&\sum_{i=1}^{5}I_{i},
\end{align}
where
\begin{align}
 \nonumber& I_{1}=\int_{\mathbb{R}}\bigg\{\rho_{n}\ast [w_x(4u_{x}+4v_{x}-u_{xx}-v_{xx})]\bigg\}sgn(\rho_{n,x}\ast w)dx,
\\ \nonumber& I_{2}=-\int_{\mathbb{R}}\bigg\{\rho_{n}\ast [(u_{x}+v_{x}-4v)w_{xx})]\bigg\}sgn(\rho_{n,x}\ast w)dx,
\\ \nonumber& I_{3}=\int_{\mathbb{R}}[\rho_{n}\ast (4u_{xx}w)]sgn(\rho_{n,x}\ast w)dx,
\\ \nonumber& I_{4}=\int_{\mathbb{R}}\bigg\{\rho_{n}\ast G_{x}\ast[(u_{x}+v_{x})w_{x}]\bigg\}sgn(\rho_{n,x}\ast w)dx,
\\ \nonumber& I_{5}=\int_{\mathbb{R}}\bigg\{\rho_{n}\ast G_{xx}\ast[2(u_{x}+v_{x})w_{x}+6(u+v)w]\bigg\}sgn(\rho_{n,x}\ast w)dx.
\end{align}
Due to (\ref{l13})-(\ref{l14}), we find for all $n\geq1$ and $t\in \mathbb{R}_{+}$
\begin{align}\label{l15}
\nonumber I_{1}\leq&\int_{\mathbb{R}}|\rho_{n}\ast [w_x(4u_{x}+4v_{x}-u-v)]|dx
+\int_{\mathbb{R}}\{\rho_{n}\ast [w_x(u-u_{xx}+v-v_{xx})]\}sgn(\rho_{n,x}\ast w)dx
\\\nonumber\leq&\|4u_{x}+4v_{x}-u-v\|_{L^\infty}\int_{\mathbb{R}}\rho_{n}\ast |w_x|dx
+\int_{\mathbb{R}}|\rho_{n}\ast w_x|[\rho_{n}\ast (u-u_{xx}+v-v_{xx})]dx
\\\nonumber &+\int_{\mathbb{R}}\bigg\{\rho_{n}\ast [w_x(u-u_{xx}+v-v_{xx})]-
\rho_{n}\ast(w_x)[\rho_{n}\ast(u-u_{xx}+v-v_{xx})]\bigg\}sgn(\rho_{n,x}\ast w)dx
\\\nonumber\leq&\|4u_{x}+4v_{x}-u-v\|_{L^\infty}\int_{\mathbb{R}}\rho_{n}\ast |w_x|dx
 \\\nonumber &-\int_{\mathbb{R}}|\rho_{n}\ast w_x|[\rho_{n}\ast (u_{xx}+v_{xx})]dx
+\|\rho_{n}\ast (u+v)\|_{L^\infty}\int_{\mathbb{R}}|\rho_{n}\ast w_x|dx
\\\nonumber &+\int_{\mathbb{R}}\bigg|\rho_{n}\ast [w_x(u-u_{xx}+v-v_{xx})]-
\rho_{n}\ast(w_x)[\rho_{n}\ast(u-u_{xx}+v-v_{xx})]\bigg|dx
\\\nonumber\leq&6M\int_{\mathbb{R}}|\rho_{n}\ast w_x|dx+5M\int_{\mathbb{R}}[\rho_{n}\ast |w_x|-|\rho_{n}\ast w_x|]dx-\int_{\mathbb{R}}|\rho_{n}\ast w_x|[\rho_{n}\ast (u_{xx}+v_{xx})]dx\\\nonumber &+\int_{\mathbb{R}}\bigg|\rho_{n}\ast [w_x(u-u_{xx}+v-v_{xx})]-
\rho_{n}\ast(w_x)[\rho_{n}\ast(u-u_{xx}+v-v_{xx})]\bigg|dx
\\\leq&6M\int_{\mathbb{R}}|\rho_{n}\ast w_{x}|dx+Q_{n}(t)-\int_{\mathbb{R}}|\rho_{n}\ast w_x|[\rho_{n}\ast (u_{xx}+v_{xx})]dx,
\\\nonumber
I_{2}=&\int_{\mathbb{R}}\{\rho_{n}\ast [(u_{x}+v_{x}-4v)(w-w_{xx})]\}sgn(\rho_{n,x}\ast w)dx
\\\nonumber&-\int_{\mathbb{R}}\{\rho_{n}\ast [(u_{x}+v_{x}-4v)w]\}sgn(\rho_{n,x}\ast w)dx
\\\nonumber=&
\int_{\mathbb{R}}[\rho_{n}\ast(w- w_{xx})][\rho_{n}\ast(u_{x}+v_{x}-4v)]sgn(\rho_{n,x}\ast w)dx
\\\nonumber&+\int_{\mathbb{R}}\bigg\{\rho_{n}\ast [(u_{x}+v_{x}-4v)(w-w_{xx})]
\\\nonumber&-[\rho_{n}\ast (w-w_{xx}) ] [\rho_{n}\ast(u_{x}+v_{x}-4v)]\bigg\}sgn(\rho_{n,x}\ast w)dx
\\\nonumber&
-\int_{\mathbb{R}}\{\rho_{n}\ast [(u_{x}+v_{x}-4v)w]\}sgn(\rho_{n,x}\ast w)dx
\\\nonumber\leq&-\int_{\mathbb{R}}(\frac{\partial}{\partial x}|\rho_{n}\ast w_{x}|)[\rho_{n}\ast(u_{x}+v_{x}-4v)]dx
+\int_{\mathbb{R}}[\rho_{n}\ast w][\rho_{n}\ast(u_{x}+v_{x}-4v)]sgn(\rho_{n,x}\ast w)dx
\\\nonumber&+\int_{\mathbb{R}}\bigg|\rho_{n}\ast [(4v-u_{x}-v_{x})(w-w_{xx})]dx
-[\rho_{n}\ast(w- w_{xx}) ] [\rho_{n}\ast(u_{x}+v_{x}-4v)]\bigg|dx
\\\nonumber&+\int_{\mathbb{R}}|\rho_{n}\ast [(u_{x}+v_{x}-4v)w]|dx
\\\nonumber\leq&\int_{\mathbb{R}}|\rho_{n}\ast w_{x}|[\rho_{n}\ast(u_{xx}+v_{xx}-4v_{x})]dx
\\\nonumber&+\int_{\mathbb{R}}\bigg|\rho_{n}\ast [(u_{x}+v_{x}-4v)(w-w_{xx})]
-[\rho_{n}\ast(w- w_{xx}) ] [\rho_{n}\ast(u_{x}+v_{x}-4v)]\bigg|dx
\\\nonumber&+\int_{\mathbb{R}}[\rho_{n}\ast w][\rho_{n}\ast(u_{x}+v_{x}-4v)]sgn(\rho_{n,x}\ast w)dx
+\int_{\mathbb{R}}\rho_{n}\ast |(u_{x}+v_{x}-4v)w|dx
\\\nonumber\leq&\|\rho_{n}\ast(4v_{x})\|_{L^\infty}\int_{\mathbb{R}}|\rho_{n}\ast w_{x}|dx
+\int_{\mathbb{R}}|\rho_{n}\ast w_{x}|[\rho_{n}\ast(u_{xx}+v_{xx})]dx
\\\nonumber&+\int_{\mathbb{R}}\bigg|\rho_{n}\ast [(u_{x}+v_{x}-4v)(w-w_{xx})]dx
-[\rho_{n}\ast(w-w_{xx})] [\rho_{n}\ast(u_{x}+v_{x}-4v)]\bigg|dx
\\\nonumber&+\|[\rho_{n}\ast(u_{x}+v_{x}-4v)]\|_{L^\infty}\int_{\mathbb{R}}|\rho_{n}\ast w|dx
+\|(u_{x}+v_{x}-4v)\|_{L^\infty}\int_{\mathbb{R}}\rho_{n}\ast |w|dx
\\\nonumber\leq&2M\int_{\mathbb{R}}|\rho_{n}\ast w_{x}|dx+6M\int_{\mathbb{R}}|\rho_{n}\ast w|dx
+3M\int_{\mathbb{R}}\bigg(\rho_{n}\ast| w|-|\rho_{n}\ast w|\bigg)dx
\\\nonumber&+\int_{\mathbb{R}}\bigg|\rho_{n}\ast [(u_{x}+v_{x}-4v)(w-w_{xx})]dx
-[\rho_{n}\ast(w-w_{xx})] [\rho_{n}\ast(u_{x}+v_{x}-4v)]\bigg|dx
\\\nonumber&+\int_{\mathbb{R}}|\rho_{n}\ast w_{x}|[\rho_{n}\ast(u_{xx}+v_{xx})]dx
\\ \leq&2M\int_{\mathbb{R}}|\rho_{n}\ast w_{x}|dx+6M\int_{\mathbb{R}}|\rho_{n}\ast w|dx
+\int_{\mathbb{R}}|\rho_{n}\ast w_{x}|[\rho_{n}\ast(u_{xx}+v_{xx})]dx+Q_{n}(t),
\\\nonumber
I_{3}\leq&\int_{\mathbb{R}}\rho_{n}\ast |4u_{xx}w|dx
\leq4\|u_{xx}\|_{L^\infty}\int_{\mathbb{R}}\rho_{n}\ast |w|dx
\\\nonumber\leq&10M\int_{\mathbb{R}}|\rho_{n}\ast w|dx
+10M\int_{\mathbb{R}}\bigg(\rho_{n}\ast |w|-|\rho_{n}\ast w|\bigg)dx
\\\leq&10M\int_{\mathbb{R}}|\rho_{n}\ast w|dx+Q_{n}(t),
\end{align}
with $Q_{n}(t)$ in the class (\ref{l21}).
By the same token with (\ref{l11}), we get
\begin{align}
\nonumber I_{4}
\leq&\|G_{x}\|_{L^1}\int_{\mathbb{R}}|\rho_{n}\ast [(u_{x}+v_{x})w_{x}]|dx
\\\nonumber\leq&\int_{\mathbb{R}}[\rho_{n}\ast |(u_{x}+v_{x})w_{x}||dx
\\\nonumber\leq&\|u_{x}+v_{x}\|_{L^\infty}\int_{\mathbb{R}}\rho_{n}\ast|w_{x}|dx
\\\nonumber\leq&M\int_{\mathbb{R}}|\rho_{n}\ast w_{x}|dx
+M\int_{\mathbb{R}}\bigg(\rho_{n}\ast |w_{x}|
-|\rho_{n}\ast w_{x}|\bigg)dx
\\\leq&M\int_{\mathbb{R}}|\rho_{n}\ast w_{x}|dx+Q_{n}(t),
\end{align}
with $Q_{n}(t)$ in the class (\ref{l21}).

Noting that $G_{xx}\ast f=G\ast f-f$, by the same toke with (\ref{l11}), we easily infer
\begin{align}\label{l16}
\nonumber I_{5}\leq&\int_{\mathbb{R}}|\rho_{n}\ast G_{xx}\ast[2(u_{x}+v_{x})w_{x}+6(u+v)w]|dx
\\\nonumber\leq&\int_{\mathbb{R}}|\rho_{n}\ast G\ast[2(u_{x}+v_{x})w_{x}+6(u+v)w]|dx
\\\nonumber&+\int_{\mathbb{R}}|\rho_{n}\ast [2(u_{x}+v_{x})w_{x}+6(u+v)w]|dx
\\\nonumber\leq&\|G\|_{L^1}\int_{\mathbb{R}}|\rho_{n}\ast[2(u_{x}+v_{x})w_{x}+6(u+v)w]|dx
\\\nonumber&+\int_{\mathbb{R}}|\rho_{n}\ast [2(u_{x}+v_{x})w_{x}+6(u+v)w]|dx
\\\nonumber\leq&12\int_{\mathbb{R}}\rho_{n}\ast|(u+v)w|dx
+4\int_{\mathbb{R}}\rho_{n}\ast|(u_{x}+v_{x})w_{x}|dx
\\\nonumber\leq&12\|u+v\|_{L^\infty} \int_{\mathbb{R}}\rho_{n}\ast |w|dx
+4\|u_{x}+v_{x}\|_{L^\infty}\int_{\mathbb{R}}\rho_{n}\ast |w_{x}|dx
\\\nonumber\leq&12M\int_{\mathbb{R}}|\rho_{n}\ast w|dx
+4M\int_{\mathbb{R}}|\rho_{n}\ast w_{x}|dx
\\\nonumber&+12M\int_{\mathbb{R}}\bigg(\rho_{n}\ast |w|-|\rho_{n}\ast w|\bigg)dx
+4M\int_{\mathbb{R}}\bigg(\rho_{n}\ast| w_{x}|-|\rho_{n}\ast w_{x}|\bigg)dx
\\\leq&4M\int_{\mathbb{R}}|\rho_{n}\ast w_{x}|dx+12M\int_{\mathbb{R}}|\rho_{n}\ast w|dx+Q_{n}(t),
\end{align}
with $Q_{n}(t)$ in the class (\ref{l21}).

Plugging (\ref{l15})-(\ref{l16}) into (\ref{l17}), we have
\begin{align}\label{l22}
\frac{d}{dt}&\int_{\mathbb{R}}|\rho_{n}\ast w_{x}|dx\leq 15\int_{\mathbb{R}}|\rho_{n}\ast w_x|dx+28\int_{\mathbb{R}}|\rho_{n}\ast w|dx+Q_{n}(t),
\end{align}
with $Q_{n}(t)$ in the class (\ref{l21}), which along with (\ref{l23}). And applying to Gronwall's inequality yields for all $t\in\mathbb{R}_{+}, ~\text{and}~~n\geq1,$
\begin{align}\label{l25}
\int_{\mathbb{R}}(|\rho_{n}\ast w_{x}|+|\rho_{n}\ast w|)(t,x)dx\leq \int_{0}^{t}e^{36M(t-s)}Q_{n}(s)ds
\\ \nonumber +[\int_{\mathbb{R}}(|\rho_{n}\ast w_{x}|+|\rho_{n}\ast w|)(t,x)dx]e^{36M(t)}.
\end{align}

Fix $t>0$, and let $n\rightarrow\infty$ in (\ref{l25}). Noticing that $Q_{n}$ satisfies (\ref{l21}), and $w,w_{x}\in L^p(\mathbb{R})~ with~ p\in[1,\infty).$ An application of Lebesgue's dominated convergence theorem leads to
\begin{align}\label{l26}
\int_{\mathbb{R}}(|\rho_{n}\ast w_{x}|+|\rho_{n}\ast w|)(t,x)dx\leq [\int_{\mathbb{R}}(|\rho_{n}\ast w_{x}|+|\rho_{n}\ast w|)(t,x)dx]e^{36M(t)},~~t\in\mathbb{R}.
\end{align}

This complete the proof of uniqueness.

\end{proof}

\noindent\textbf{Acknowledgements}. This work was
partially supported by NNSFC (No.11271382), RFDP (No.
20120171110014), MSTDF (No. 098/2013/A3), and Guangdong Special Support Program (No. 8-2015).

\phantomsection
\addcontentsline{toc}{section}{\refname}

\end{document}